\documentclass[12pt,a4paper]{amsart}%[a4,leqno,11pt]{jsarticle}%{amsart}%{jsarticle}
\usepackage{amsmath,amsthm,amssymb}
\usepackage{graphicx}
\usepackage{ulem}
\graphicspath{{./figures/}}

 \usepackage{bm}
\usepackage[usenames]{color}
\usepackage{enumerate}
\usepackage{amsthm}
\theoremstyle{plain}
 \newtheorem{theorem}{Theorem}[section]

 \newtheorem{proposition}[theorem]{Proposition}
 \newtheorem{fact}[theorem]{Fact}
 \newtheorem*{fact*}{Fact}
 \newtheorem{lemma}[theorem]{Lemma}
 \newtheorem{corollary}[theorem]{Corollary}
 \theoremstyle{remark}
 \newtheorem{definition}[theorem]{Definition}
 \newtheorem{remark}[theorem]{Remark}
 
 \newtheorem{example}[theorem]{Example}
\numberwithin{equation}{section}

\newcommand{\R}{\mathbb{R}}%{\boldsymbol{R}}
\newcommand{\C}{\mathbb{C}}%{\boldsymbol{C}}
\newcommand{\E}{\mathbb{E}}%{\boldsymbol{E}}
\renewcommand{\S}{\mathbb{S}}

\renewcommand{\Re}{{\rm Re}}
\renewcommand{\Im}{{\rm Im}}
\newcommand{\Nil}{\operatorname{Nil}_3}
\newcommand{\nil}{\mathfrak{nil}_3}
\renewcommand{\L}{\mathbb{L}^3}
\usepackage{mathtools}
\mathtoolsset{showonlyrefs=true}%ref,eqrefしたものだけ番号が付く
\title[Singularities on timelike minimal surfaces in $\Nil$]{Singularities on timelike minimal surfaces in Lorentzian Heisenberg group}
\author[S.Akamine]{Shintaro Akamine}
\address[Shintaro Akamine]{
College of Bioresource Sciences,
Nihon University, 
1866 Kameino, Fujisawa, Kanagawa, 252-0880, Japan}
\email{akamine.shintaro@nihon-u.ac.jp}
\thanks{The first author was supported by JSPS KAKENHI Grant Number 23K12979.}

\author[H.Kiyohara]{Hirotaka Kiyohara}
\address[Hirotaka Kiyohara]{
Department of Mathematics Education,
Osaka Kyoiku University,
4-698-1 Asahigaoka, Kashiwara, Osaka, 582-8582, Japan}
\email{kiyohara-h51@cc.osaka-kyoiku.ac.jp}
\thanks{The second author was supported by JST SPRING, Grant Number JPMJSP2119, and by the Foundation of Research Fellows, The Mathematical Society of Japan.}

\date{\today}

\keywords{Timelike minimal surfaces, Timelike constant mean curvature surface, Lorentzian harmonic map, Minkowski space, Heisenberg group}
\subjclass[2020]{Primary 53A10; Secondary 53B30, 57R45} %53C43
\begin{document}
\maketitle
\begin{abstract}
Timelike minimal surfaces in the three-dimensional Lorentzian Heisenberg group are shown to be constructed from Lorentzian harmonic maps into the de-Sitter two-sphere, and they naturally admit singular points. In particular, we provide criteria for cuspidal edges, swallowtails, and cuspidal cross caps, and present several explicit examples.
\end{abstract}
%%%%%%%%%%%%%%%%%%%%%%%%%%%%%%%%%%
%%%%%%%%%%%%%%%%%%%%%%%%%%%%%%%%%%
%%%%%%%%%%%%%%%%%%%%%%%%%%%%%%%%%%
\section{Introduction}
Minimal and constant mean curvature (CMC) surfaces are fundamental objects in differential geometry and geometric analysis, and they have been studied from many viewpoints. Moreover, many attempts have been made to generalize the theory of minimal and CMC surfaces to Minkowski $3$-space $\L$. In particular, Lorentzian analogues of classical results were established for spacelike surfaces, including Calabi--Bernstein-type rigidity theorems \cite{C, CY} as well as providing representation formulas \cite{AN, Kosamu}. These works clarified fundamental global and analytic properties of spacelike surfaces in the Lorentzian setting.

Alongside the study of spacelike surfaces, timelike minimal surfaces in $\L$ have also been investigated. In particular, M.~Magid \cite{M} studied timelike surfaces, providing a representation formula with prescribed Gauss map and mean curvature. Various properties of Lorentz surfaces have also been investigated; see, for example, \cite{W} and the references therein.

In both the spacelike and timelike settings, it became clear that singularities arise naturally, and that one cannot, in general, avoid dealing with them. In this context, M.~Umehara--K.~Yamada \cite{UY} introduced the notion of {\it maxfaces} as spacelike maximal surfaces with singular points and no branch points. Subsequently, Umehara, Yamada, and their collaborators provided explicit criteria for detecting these singularities, cuspidal edges, swallowtails \cite{UY} (see also \cite{KRSUY}), and cuspidal cross caps \cite{FSUY}, from Weierstrass-type data of maximal surfaces.

Parallel developments took place for spacelike CMC surfaces in $\L$. For such surfaces, K.~Akutagawa--S.~Nishikawa \cite{AN} derived a Kenmotsu-type representation formula, which expresses a surface in terms of its Gauss map and mean curvature. This result can be regarded as a Lorentzian analogue of Kenmotsu's formula \cite{Kenmotsu} for CMC surfaces in Euclidean $3$-space $\E^3$, and it provides a concrete method for constructing spacelike CMC surfaces from prescribed data.
These representation formulas have become a basic tool in the study of CMC surfaces.
Using Kenmotsu-type representation, Y.~Umeda \cite{Umeda} introduced a framework in which spacelike CMC surfaces in $\L$ with singular points are constructed from suitably extended harmonic data. In general, constructing CMC surfaces is difficult because their Gauss maps valued in the hyperbolic plane $\mathbb{H}^2$ are not holomorphic but harmonic, that is, the data of a CMC surface is a solution to a second-order non-linear partial differential equation. However, D.~Brander \cite{B} gave representation formulas of Bj\"orling-type for spacelike CMC surfaces with singularities via an integrable system method, the so-called DPW method. D.~Brander--M. Svensson \cite{BS} also investigated timelike CMC surfaces with singularities in the same way. 

Another line of research concerns submanifolds in homogeneous Riemannian manifolds. In particular, for the three-dimensional Heisenberg group $\Nil$, C.~Figueroa \cite{F}, J.~Inoguchi \cite{I}, and B.~Daniel \cite{D} showed independently that the normal Gauss map of a nowhere vertical minimal surface is a harmonic map into $\mathbb{H}^2$, and that minimal surfaces can be reconstructed from such data via a Kenmotsu-type representation. These results reveal a duality between minimal surfaces in $\Nil$ and spacelike CMC surfaces in $\L$.

It is natural to generalize the theory in $\Nil$ to Lorentzian settings. S.~Rahmani \cite{R} classified left invariant Lorentzian metrics on $\Nil$ and identified two families of non-flat metrics, denoted by $g_+$ and $g_-$ as in \eqref{eq: lorentzian metric}. For the metric $g_+$, S.-P.~Kobayashi and the second named author \cite{KK} showed that the normal Gauss map of a nowhere vertical timelike minimal surface induces a Lorentzian harmonic map into the de-Sitter $2$-sphere $\S^2_1$, and that such surfaces can be reconstructed from this harmonic data. Moreover, for the metric $g_-$, H.~Lee \cite{L}, together with related work by D.~Brander--S.-P.~Kobayashi \cite{BK}, established that the normal Gauss map of a spacelike maximal surface is a harmonic map into the unit sphere. As a result, timelike minimal surfaces in $(\Nil, g_+)$ correspond to timelike CMC surfaces in $\L$, and spacelike maximal surfaces in $(\Nil, g_-)$ correspond to CMC surfaces in Euclidean $3$-space $\mathbb{E}^3$.

Despite these developments, the singularity theory of zero mean curvature surfaces in $\Nil$ has remained largely unexplored.
Although recently, Brander--Kobayashi \cite{BK} studied singularities on spacelike maximal surfaces with respect to the metric $g_-$ using integrable systems techniques and established explicit criteria for generic singularities, to the best of the authors' knowledge, there is no other systematic investigation of singularities on surfaces in $\Nil$. For example, there is no known example of a timelike minimal surface in $(\Nil, g_+)$ with a specific singularity such as a swallowtail or a cuspidal cross cap.

The purpose of the present paper is to develop a theory of timelike minimal surfaces with singular points in $(\Nil, g_+)$. More precisely, we establish explicit criteria for detecting typical singularities such as cuspidal edges, swallowtails, and cuspidal cross caps, expressed in terms of harmonic data (see Theorem \ref{thm: criteria}). 
Furthermore, using the duality between timelike minimal surfaces in $\Nil$ and timelike CMC surfaces in $\L$, we characterize such singularities on surfaces in $\Nil$ via the corresponding regular points on surfaces in $\mathbb{L}^3$  (see Theorem \ref{thm: criteria2}).
In Theorem \ref{thm: existence}, we also give first examples of timelike minimal surfaces in $\mathrm{Nil}$ which have swallowtails and cuspidal cross caps.

%%%%%%%%%%%%%%%%%%%%%%%%%%%%%%%%%%
%%%%%%%%%%%%%%%%%%%%%%%%%%%%%%%%%%
%%%%%%%%%%%%%%%%%%%%%%%%%%%%%%%%%%
\section{Preliminaries}
In this section, we review the theory of timelike surfaces in the Heisenberg group and a characterization via harmonic maps with a Kenmotsu-type representation.

Let $\tau$ be a non-zero constant in $\R$. The three-dimensional Heisenberg group $\Nil(\tau)$ is a standard model of nilpotent Lie groups, and is regarded as the Lie group $\R^3$ equipped with the following group multiplication$:$
\[
\begin{pmatrix}x^1\\ x^2\\ x^3\end{pmatrix} \cdot \begin{pmatrix}y^1\\ y^2\\ y^3\end{pmatrix} = \begin{pmatrix}x^1+y^1\\ x^2+y^2\\ x^3+y^3 +\tau (x^1y^2-x^2y^1)\end{pmatrix}.
\]
The Lie algebra, denoted by $\nil(\tau)$, corresponding to $\Nil(\tau)$ is $\R^3$ endowed with the bracket $[,]$ satisfying following relations.
\[
[e_1, e_2]=2\tau e_3,
\quad
[e_2, e_3]=0,
\quad
[e_3, e_1]=0,
\]
where $e_1={}^t(1,0,0)$, $e_2={}^t(0,1,0)$, and $e_3={}^t(0,0,1)$ form the standard basis of $\R^3$ and the center is obviously given by $e_3$.
The Heisenberg group $\Nil(\tau)$ admits a Riemannian metric $g_R$ that is compatible with the group structure$:$
\begin{equation}\label{eq: Riem met}
g_R = dx^1 \otimes dx^1 + dx^2 \otimes dx^2 + \eta(\tau) \otimes \eta(\tau),
\end{equation}
where $\eta(\tau) = dx^3 + \tau (x^2 dx^1-x^1 dx^2)$.
The triplet
\[
E_1:=\frac{\partial}{\partial x^1} - \tau x^2 \frac{\partial}{\partial x^3},
\quad
E_2:=\frac{\partial}{\partial x^2} + \tau x^1 \frac{\partial}{\partial x^3},
\quad
E_3:= \frac{\partial}{\partial x^3}
\] 
gives an orthonormal basis of the space consisting of left invariant vector fields. Since the tangent space at the unit element ${}^t(0,0,0)$ and the space of left invariant vector fields are linearly isomorphic, $e_k$ can be identified with $E_k$ for $k=1,2,3$, through the natural identification between $\R^3$ and its tangent space at the origin.
We would like to note that $\eta(\tau)$ is a contact form on $\Nil(\tau)$, and the corresponding Reeb vector field is $E_3$. It is known that $\Nil(1)$ is one of the Sasakian space forms.

Although any left invariant Riemannian metric on $\Nil(\tau)$ is isometric with $g_R$, Rahmani \cite{R} showed that there exist three types of left invariant Lorentzian metrics on $\Nil(\tau)$, two non-flat metrics and one flat metric. They are classified according to whether the direction of the center $E_3$ is spacelike, timelike, or lightlike. In particular, non-flat Lorentzian metrics are given as follows:
\begin{equation}\label{eq: lorentzian metric}
\begin{aligned}
g_+&= -dx^1 \otimes dx^1 + dx^2 \otimes dx^2 + \eta(\tau) \otimes \eta(\tau),\\
g_-&= dx^1 \otimes dx^1 + dx^2 \otimes dx^2- \eta(\tau) \otimes \eta(\tau).
%g_0&= 4\tau^2dx^1 \otimes dx^1 -dx^2 \otimes dx^2 + 2dx^2 \otimes \eta(\tau).
\end{aligned}
\end{equation}
The Levi-Civita connections for $g= g_R$, $g_+$, and $g_-$, denoted by $\nabla$ are computed as follows by the Koszul formula$:$
\begin{equation*}
\begin{matrix*}[l]
\nabla_{E_1}E_1 = 0                           & \nabla_{E_1}E_2 = \tau E_3                              & \nabla_{E_1}E_3 = -\epsilon_{-}\tau E_2 \\
\nabla_{E_2}E_1 = -\tau E_3                 & \nabla_{E_2}E_2 = 0                                       & \nabla_{E_2}E_3 = \epsilon_{+} \epsilon_{-} \tau E_1\\
\nabla_{E_3}E_1 = -\epsilon_{-} \tau E_2 & \nabla_{E_3}E_2 = \epsilon_{+}\epsilon_{-}\tau E_1 & \nabla_{E_3}E_3 = 0,
\end{matrix*}
\end{equation*}
where $\epsilon_{\pm}$ are signature depending on metric
\begin{equation}
\epsilon_{+} = \left\{ \begin{array}{ccc} 1 & \text{if} & g \neq g_{+} \\ -1 & \text{if} & g = g_{+}  \end{array} \right.,
\quad
\epsilon_{-} = \left\{ \begin{array}{ccc} 1 & \text{if} & g \neq g_{-} \\ -1 & \text{if} & g = g_{-}  \end{array} \right..
\end{equation}
%%%%%%%%%%%%%%%%%%%%%%%%%%%%%%%%%%
\subsection{Timelike surfaces}
Let $\mathbb{C}'$ be the set of paracomplex numbers of the form $z=x+jy$, where $x,y\in \mathbb{R}$ and $j$ is the imaginary unit satisfying $j^2=1$. For each $z=x+jy\in \mathbb{C}'$, one can define the notions of
\begin{itemize}
\item the real part $\Re{z}:=x$ and the imaginary part $\Im{z}:=y$,
\item the conjugate $\bar{z}:=x-jy$, and
\item squared modulus of $z$ as a $|z|^2:=z\bar{z}=x^2-y^2$.
\end{itemize}
It should be remarked that the relation $|z|^2<0$ may hold in general, and $|jz|^2=-|z|^2$ holds.
Moreover, the following proposition shows a basic property of paracomplex numbers.
\begin{proposition}\label{prop: inverse}
A paracomplex number $z$ has the inverse $1/z$ if and only if $z$ satisfies that $|z|^2 \neq 0$.
\end{proposition}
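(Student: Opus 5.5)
The plan is to argue directly from the algebraic identity $z\bar z=|z|^2\in\R$, treating the two implications separately. Throughout we use only that $\C'$ is a commutative, associative ring with unit $1$: multiplication is $\R$-bilinear and governed by $j^2=1$. We also use that conjugation is multiplicative, $\overline{zw}=\bar z\,\bar w$. This follows from a one-line expansion, or from noting that $x+jy\mapsto x-jy$ is the ring automorphism fixing $\R$ and sending $j\mapsto -j$, which is consistent because $(-j)^2=1$.

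For the implication that $|z|^2\neq 0$ gives an inverse, we write $z=x+jy$ and propose the explicit candidate
\[
w:=\frac{\bar z}{|z|^2}=\frac{x-jy}{x^2-y^2}.
\]
Since $|z|^2=x^2-y^2$ is a nonzero real number, $w$ is a well-defined element of $\C'$. A direct computation gives $zw=z\bar z/|z|^2=1$, and $wz=1$ follows by commutativity. Hence $1/z$ exists and equals $\bar z/|z|^2$.

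For the converse, suppose some $w\in\C'$ satisfies $zw=1$. Multiplicativity of conjugation gives $\bar z\bar w=\overline{1}=1$. Multiplying the two identities and regrouping by commutativity and associativity, we obtain
\[
(z\bar z)(w\bar w)=1,\qquad\text{that is,}\qquad |z|^2\,|w|^2=1 .
\]
Both factors are real numbers, so $|z|^2\neq 0$. This modulus-multiplicativity is essentially the only nontrivial step, and it reduces to checking $\overline{zw}=\bar z\bar w$.

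An alternative for the converse is to write out $zw=1$ in coordinates. With $w=u+jv$ this becomes the real linear system $xu+yv=1$, $yu+xv=0$, whose coefficient determinant is $x^2-y^2$. If $x^2-y^2=0$, then $y=\pm x$. Adding or subtracting the two equations then gives $0=1$ if $x=0$, and forces $x(u\pm v)=1$ together with $x(u\pm v)=0$ otherwise; either way the system has no solution. I will present the modulus argument because it is shorter, and keep this coordinate computation only as a check.
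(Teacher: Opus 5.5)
Your proof is correct and essentially matches the paper's. For $|z|^2\neq 0$ the paper also exhibits the inverse $\bar z/|z|^2 = x/(x^2-y^2) - jy/(x^2-y^2)$; for the converse it writes $1/z=u+jv$ and concludes $|z|^2 = 1/(u^2-v^2)\neq 0$, which is your identity $|z|^2|w|^2=1$, so your derivation via $\overline{zw}=\bar z\,\bar w$ just makes explicit the multiplicativity of the modulus that the paper uses silently.
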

\begin{proof}
First, we assume that $1/z$ is represented by $1/z= u + jv$ for some $u$ and $v \in \R$. Then we obtain $|z|^2 = 1/ (u^2 - v^2) \neq 0$.
Conversely, assuming that $|z|^2 = x^2-y^2 \neq0$, $1/z$ is represented as $x/(x^2-y^2) -j y/(x^2-y^2)$.
\end{proof}

An immersion $f \colon M \to \Nil(\tau)$ from a $2$-dimensional connected manifold $M$ into the Heisenberg group $\Nil(\tau)$ endowed with a Lorentzian metric is said to be {\it timelike} if the induced metric on $M$ is Lorentzian. Since any timelike surface $f$ can be considered as a conformal immersion from a Lorentz surface, then the induced metric of a timelike surface can be written in the form $e^udzd\bar{z}$ where the function $e^u$ is called the {\it conformal factor} of the surface.

From now on, we will endow $\Nil(\tau)$ with the metric $g_+$, and denote by $\nabla$ the Levi-Civita connection of $g_+$. The derivative of a vector field $X$ along $f$ with respect to $\partial/ \partial z$ is denoted by $\nabla_{z} X$. With respect to metric $g_+$, $E_1$ represents a timelike direction and $E_2$ and $E_3$ define spacelike directions, respectively. As a first step in our discussion of timelike surfaces, we begin with the following proposition.
\begin{proposition}\label{prop: derivative}
Let $f\colon \C' \supset D \to \Nil(\tau)$ be a timelike surface defined on a connected domain $D \subset \C'$. Then, there exist para-complex valued functions $g$ and $\hat\omega$ satisfying
\begin{equation}\label{eq: derivative}
f_z = ( g^2+1) \frac{1}{\tau}\hat\omega E_1 + j (g^2-1) \frac{1}{\tau}\hat\omega E_2 + 2g \frac{1}{\tau}\hat\omega E_3.
\end{equation}
Here, $f_z=f_* \left(\partial/\partial z\right)$.
\end{proposition}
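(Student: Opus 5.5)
Write $f_z = \phi_1 E_1 + \phi_2 E_2 + \phi_3 E_3$ with paracomplex coefficients $\phi_k$. The plan is to use conformality to find $g$ and $\hat\omega$ from the $\phi_k$.

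\emph{Conformality.} Since $f$ is a conformal timelike immersion with induced metric $e^u dz d\bar z$, we have $g_+(f_z,f_z)=0$, which reads
\begin{equation}
-\phi_1^2+\phi_2^2+\phi_3^2=0 .
\end{equation}
We also have $g_+(f_z,f_{\bar z})=e^u/2\neq 0$, that is,
\begin{equation}
-|\phi_1|^2+|\phi_2|^2+|\phi_3|^2 \neq 0 .
\end{equation}
The null condition factors as $(\phi_1-j\phi_2)(\phi_1+j\phi_2)=\phi_3^2$, because $(j\phi_2)^2=\phi_2^2$. Comparing with \eqref{eq: derivative} suggests setting
\begin{equation}
\hat\omega=\frac{\tau}{2}(\phi_1-j\phi_2), \qquad g=\frac{\phi_3}{\phi_1-j\phi_2}.
\end{equation}
These give $\phi_1=(g^2+1)\hat\omega/\tau$ and $\phi_2=j(g^2-1)\hat\omega/\tau$, since $j\cdot j=1$ yields $\phi_1-j\phi_2=2\hat\omega/\tau$ as required. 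They also give $\phi_3=2g\hat\omega/\tau$. Substituting these back and checking the null relation reduces to the identity $(g^2+1)^2-(g^2-1)^2=4g^2$.

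\emph{Main obstacle: zero divisors.} By Proposition \ref{prop: inverse}, the quotient defining $g$ exists only where $|\phi_1-j\phi_2|^2\neq 0$, and in $\C'$ this can fail even when $\phi_1-j\phi_2\neq 0$. I would handle this in three steps.

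1. Use the null-coordinate decomposition $\C'\cong\R\oplus\R$ via the idempotents $e_\pm=(1\pm j)/2$. Write each $\phi_k=a_ke_++b_ke_-$, so that $a_k$ and $b_k$ are real multiples of $f_x\pm f_y$.
2. Solve the equation componentwise. In each real component the null condition becomes $-a_1^2+a_2^2+a_3^2=0$ (and similarly for $b$). A real null vector of a Lorentzian $3$-space has a stereographic-type parametrization. The nondegeneracy condition (ii) guarantees that neither the $a$-vector nor the $b$-vector vanishes.
3. Handle the degenerate directions. Where $a_1-a_2$ (the $e_+$ part of $\phi_1-j\phi_2$, up to sign) vanishes, the null condition forces $a_3=0$. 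In that case I would instead use the alternative solution $g=(\phi_1+j\phi_2)/\phi_3$, which agrees with the first formula where both are defined. If necessary, $g$ is allowed to take a value at infinity in the corresponding component; I would check that the statement tolerates this, as with the Gauss map into $\S^2_1$ in the sequel.

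On the generic open set where $|\phi_1-j\phi_2|^2\ne0$, the explicit formulas above already give \eqref{eq: derivative}.
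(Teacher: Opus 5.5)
\textbf{Same construction as the paper.} Your $\hat\omega$ and $g$ are exactly the paper's. The paper uses Lemma \ref{lem: root} to take square roots $A,B$ with $A^2=\epsilon(\phi_1-j\phi_2)/2$, $B^2=\epsilon(\phi_1+j\phi_2)/2$ and $\phi_3=2AB$. It then sets $g=\epsilon B/A$ and $\hat\omega=\epsilon\tau A^2$. Since $\epsilon^2=1$, these are $\hat\omega=\frac{\tau}{2}(\phi_1-j\phi_2)$ and $g=\epsilon AB/A^2=\phi_3/(\phi_1-j\phi_2)$. So your direct route just skips the square-root lemma.

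\textbf{The obstacle you flag is real.} The paper disposes of the invertibility issue by asserting that conformality and immersion give $|A|^2\neq0$, i.e.\ $|\phi_1-j\phi_2|^2\neq0$. That assertion is false. Take a point where the tangent plane is $\mathrm{span}\{E_1,E_2\}$, e.g.\ the origin of the plane $x^3=0$. Choose null coordinates $u=x+y$, $v=x-y$ with $f_u=E_1+E_2$ and $f_v=-E_1+E_2$ there. Then $f_z=e_+f_u+e_-f_v=jE_1+E_2$, so $g_+(f_z,f_z)=0$ and $g_+(f_z,f_{\bar z})=2>0$, yet $\phi_1-j\phi_2=0$.

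\textbf{Your Step 3 does not close the gap, and no argument can if $g$ must be paracomplex valued.} Comparing the $E_1$ and $E_2$ components of \eqref{eq: derivative} forces $\hat\omega=\frac{\tau}{2}(\phi_1-j\phi_2)$. Any $f_z$ of the form \eqref{eq: derivative} satisfies the algebraic identity $g_+(f_z,f_{\bar z})=-2(1-|g|^2)^2|\hat\omega|^2/\tau^2$. Hence wherever $|\phi_1-j\phi_2|^2=0$, the right-hand side is degenerate for every finite $g$, contradicting that $f$ is an immersion.

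Concretely, where $a_1=a_2$ you yourself get $a_3=0$. So $\phi_3$ is a zero divisor too, and $(\phi_1+j\phi_2)/\phi_3$ has $e_+$-component $2a_1/0$. The alternative quotient only confirms that the $e_+$-component of $g$ is $\infty$. Likewise, the $e_-$-component is $\infty$ where $b_1+b_2=0$.

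The good set is also not necessarily generic. The bad set $\{f_u\parallel E_1+E_2\}\cup\{f_v\parallel E_1-E_2\}$ is typically a curve. It is all of $D$ for a timelike surface swept out by integral curves of $E_1+E_2$, with $u$ chosen along them.

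\textbf{What you can actually prove.} Rather than ``checking that the statement tolerates this'', state the result in the form that is true. The formula holds with paracomplex $g$, $\hat\omega$ on the open set $\{|\phi_1-j\phi_2|^2\neq0\}$. It holds on all of $D$ only if each null component of $g$ may take the value $\infty$. The paper tacitly allows this later, in Proposition \ref{prop:singpt}(2).
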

The following lemma is useful in proving this proposition.
\begin{lemma}[{\cite[lemma $2.1$]{KK}}]\label{lem: root}
If the product $xy$ of two para-complex numbers $x$ and $y \in \C'$ has a square root, then there exists unique $\epsilon \in \{\pm1, \pm j\}$ such that $\epsilon x$ and $\epsilon y$ simultaneously have a root.
\end{lemma}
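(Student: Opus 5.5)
The plan is to diagonalize paracomplex multiplication by passing to the null (idempotent) basis. Set $e_{+}:=\tfrac{1+j}{2}$ and $e_{-}:=\tfrac{1-j}{2}$. Then $e_\pm^2=e_\pm$, $e_+e_-=0$ and $e_++e_-=1$. Every $z=x+jy\in\C'$ can be written uniquely as $z=a\,e_++b\,e_-$ with $a=x+y$ and $b=x-y$, and in these coordinates multiplication is componentwise: $(a e_++b e_-)(a'e_++b'e_-)=aa'e_++bb'e_-$. This identifies $\C'\cong\R\oplus\R$ as algebras, and is consistent with Proposition \ref{prop: inverse}, since $|z|^2=ab$.

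\textbf{Step 1: characterize which paracomplex numbers have square roots.} Because multiplication is componentwise, $w=c\,e_++d\,e_-$ satisfies $w^2=c^2e_++d^2e_-$. Hence $z=a e_++b e_-$ has a square root if and only if $a\ge 0$ and $b\ge 0$; in that case $\sqrt{a}\,e_++\sqrt{b}\,e_-$ is a root.

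\textbf{Step 2: compute the action of the four units.} We have $1=e_++e_-$, $-1=-e_+-e_-$, $j=e_+-e_-$ and $-j=-e_++e_-$. So multiplying by $\epsilon\in\{\pm1,\pm j\}$ multiplies the two components by the sign pair $(s_+,s_-)$, which runs over all four elements of $\{\pm1\}^2$ exactly once.

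\textbf{Step 3: existence.} Write $x=a_1e_++b_1e_-$ and $y=a_2e_++b_2e_-$. By Step 1, the hypothesis that $xy$ has a square root means $a_1a_2\ge 0$ and $b_1b_2\ge0$. Thus $a_1,a_2$ do not have strictly opposite signs, so there is $s_+\in\{\pm1\}$ with $s_+a_1\ge0$ and $s_+a_2\ge0$. Likewise there is $s_-\in\{\pm1\}$ with $s_-b_1\ge0$ and $s_-b_2\ge0$. Let $\epsilon$ be the unit corresponding to $(s_+,s_-)$ from Step 2. Then $\epsilon x$ and $\epsilon y$ have nonnegative components, so both have square roots by Step 1.

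\textbf{Step 4: uniqueness, the main point needing care.} The sign $s_+$ is forced exactly when $(a_1,a_2)\neq(0,0)$, and $s_-$ is forced exactly when $(b_1,b_2)\ne(0,0)$. If, for example, $x=y=0$, every $\epsilon$ works, so uniqueness cannot hold unconditionally. The statement has to be read under the nondegeneracy implicit in its application, and I would make this explicit. A sufficient condition is that $|xy|^2\neq0$, i.e. $xy$ is invertible by Proposition \ref{prop: inverse}; then all of $a_1,a_2,b_1,b_2$ are nonzero and $(s_+,s_-)$, hence $\epsilon$, is unique. In the intended use in Proposition \ref{prop: derivative}, this would come from the timelike (non-degenerate conformal) condition. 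The key remaining check there is that the relevant product has nonzero squared modulus.
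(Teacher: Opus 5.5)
The paper gives no proof of this lemma; it is quoted from \cite[Lemma~2.1]{KK}. So there is no route to compare, and your argument has to stand on its own.

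Your Steps 1--3 are correct and give a clean existence proof. The point is to identify $\C'\cong\R\oplus\R$ via $e_\pm$. A number has a root iff both components are $\ge 0$, and the four units act by the four sign pairs. Your Step 4 is also right that uniqueness, as literally stated, is false. This happens not only for $x=y=0$ but also for nonzero zero divisors. For example, if $x=y=e_+$, then both $\epsilon=1$ and $\epsilon=j$ work, since $je_+=e_+$. Your criterion is the correct one: $\epsilon$ is unique iff the $e_+$-components of $x,y$ are not both zero and likewise for the $e_-$-components.

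The point to fix is the hypothesis you propose to carry into Proposition \ref{prop: derivative}. There $x=\tfrac12(\phi^1-j\phi^2)$, $y=\tfrac12(\phi^1+j\phi^2)$ and $xy=(\phi^3)^2/4$. In the paper's normalization this is $xy=g^2\hat\omega^2/\tau^2$. Nothing prevents $\phi^3$ from vanishing at an immersed point, for instance where $g=0$, i.e.\ $N=E_3$ and the tangent plane is spanned by $E_1,E_2$. So $|xy|^2\neq 0$ does not follow from the timelike condition, and the ``key remaining check'' you name would fail.

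Use your sharp criterion instead; it does hold at every immersed point. Write $\phi^k=p_k e_+ + q_k e_-$. Then $g_+(f_z,f_z)=0$ says that $p=(p_k)$ and $q=(q_k)$ are null for $\mathrm{diag}(-1,1,1)$. Up to a factor, $p$ and $q$ are the derivatives of $f$ along the two null coordinate directions, so they are nonzero at an immersed point. A nonzero null vector for $\mathrm{diag}(-1,1,1)$ has nonzero first entry, hence $p_1\neq 0\neq q_1$.

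Since $je_\pm=\pm e_\pm$, the $e_+$-components of $x,y$ are $\tfrac12(p_1\mp p_2)$, and the $e_-$-components are $\tfrac12(q_1\pm q_2)$. Neither pair can vanish, so $\epsilon$ is unique there. State the lemma with this nondegeneracy hypothesis rather than $|xy|^2\neq0$.
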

\begin{proof}[Proof of Proposition \ref{prop: derivative}]
Expand $f_z$ to $f_z = \sum_{k=1}^3 \phi^kE_k$. Since $g_+(f_z, f_z) =0$ holds, we have $(\phi^3)^2 = (\phi^1 -j \phi^2) (\phi^1 + j \phi^2)$, and hence, by Lemma \ref{lem: root}, there exist $A$, $B$, and unique $\epsilon \in \{\pm1, \pm j\}$ such that
\[
A^2= \epsilon \frac{\phi^1- j \phi^2}{2},
\quad
B^2 = \epsilon \frac{\phi^1+ j \phi^2}{2}.
\]
Although the functions $A$ and $B$ are determined only up to a multiple of the para-complex unit and a sign, they can be chosen appropriately so that $\phi^3 = 2AB$. The conformality of $f$ and the fact that $f$ is an immersion imply $|A|^2 \neq 0$. Therefore, by using Proposition \ref{prop: inverse}, we can define functions $g$ and $\hat\omega$ by
\[
g:= \epsilon \frac{B}{A}, \quad \hat\omega := \epsilon \tau A^2,
\]
and these functions yield the formula \eqref{eq: derivative}.
Moreover, since we assume the conformal factor $e^u = -4 |\epsilon|^2 (|A|^2 - |\epsilon|^2 |B|^2)^2$ to be positive, it follows that $\epsilon$ must be either $j$ or $-j$.
\end{proof}

The unit normal vector field $N$ of $f$ is given in terms of $g$ by
\begin{equation}\label{eq: unit normal}
N=\frac{g+\bar{g} }{1-|g|^2} E_1+ \frac{j(g-\bar{g}) }{1-|g|^2} E_2+ \frac{1+|g|^2}{1-|g|^2} E_3
\end{equation}
Through the left translations, the vector field $N$ can be interpreted as a map
\[f^{-1}N=\frac{g+\bar{g} }{1-|g|^2} e_1+ \frac{j(g-\bar{g}) }{1-|g|^2} e_2+ \frac{1+|g|^2}{1-|g|^2} e_3\]
taking values in $\nil$. We would like to note that the function $g$ can be regarded as $f^{-1}N$ via the stereographic projection $\pi$ of $\S^2_1 \subset \nil$ from $-e_3$, that is, $g=\pi \circ f^{-1}N$ where
\[
\pi \colon \sum_{k=1}^3 X^kE_k \mapsto \frac{X^1}{1+X^3} +j \frac{X^2}{1+X^3}.
\]
We call $N$ or $g$ the {\it normal Gauss map} of a timelike surface $f$.

A timelike surface in $\Nil(\tau)$ is called {\it nowhere vertical} if $g_+(N, E_3) \neq 0$, that is, $|g|^2 \neq -1$ holds anywhere. The harmonicity of maps into the de Sitter $2$-sphere is closely related to the minimality of nowhere vertical timelike surfaces in $\Nil(\tau)$.
To show this relation, we would like to prove the following lemmas.
\begin{lemma}\label{lem: harmonic1}
The normal Gauss map $g$ of a nowhere vertical timelike minimal surface $f$ satisfies
\begin{equation}\label{eq: harmonic1}
g_{z\bar{z}} - \frac{2\bar{g}}{1+|g|^2} g_z g_{\bar{z}} = 0.
\end{equation}
\end{lemma}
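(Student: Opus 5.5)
Work in the left-invariant frame: write $f_z=\sum_k\phi^kE_k$ with $\phi^1=(g^2+1)\hat\omega/\tau$, $\phi^2=j(g^2-1)\hat\omega/\tau$, $\phi^3=2g\hat\omega/\tau$ as in Proposition \ref{prop: derivative}. Then compute $\nabla_{\bar z}f_z$. Because the $E_k$ are not parallel, this equals the coordinate derivative term $\sum_k\phi^k_{\bar z}E_k$ plus $\sum_{k,l}\overline{\phi^l}\phi^k\nabla_{E_l}E_k$. The correction uses the connection table with $\epsilon_+=-1$ and $\epsilon_-=1$. For a conformal timelike surface, $\nabla_{\bar z}f_z=\frac{e^u}{4}H N$ up to a constant. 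So minimality is equivalent to $\nabla_{\bar z}f_z=0$, which gives three scalar equations.

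\textbf{Step 1: the connection terms.} Using $\nabla_{E_1}E_2=\tau E_3$, $\nabla_{E_2}E_1=-\tau E_3$, $\nabla_{E_1}E_3=\nabla_{E_3}E_1=-\tau E_2$, $\nabla_{E_2}E_3=\nabla_{E_3}E_2=-\tau E_1$, the three components of $\nabla_{\bar z}f_z=0$ are
\begin{equation*}
\phi^1_{\bar z}-\tau(\overline{\phi^2}\phi^3+\overline{\phi^3}\phi^2)=0,\quad
\phi^2_{\bar z}-\tau(\overline{\phi^1}\phi^3+\overline{\phi^3}\phi^1)=0,\quad
\phi^3_{\bar z}+\tau(\overline{\phi^1}\phi^2-\overline{\phi^2}\phi^1)=0.
\end{equation*}

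\textbf{Step 2: reduce to equations in $g$ and $\hat\omega$.} Take the combinations $\phi^1\mp j\phi^2$, which equal $2\hat\omega/\tau$ and $2g^2\hat\omega/\tau$, together with $\phi^3$. This yields equations for $\hat\omega_{\bar z}$ and $(g^2\hat\omega)_{\bar z}$, $(g\hat\omega)_{\bar z}$. Eliminating $\hat\omega_{\bar z}$ gives a first-order relation of the form
\begin{equation*}
g_{\bar z}=-\bar\hat\omega\,(1+|g|^2)^2/(\text{something})
\end{equation*}
or, more precisely, $\hat\omega(g_{\bar z})$ expressed via $\bar{\hat\omega}$, $g$ and $\bar g$. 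It also gives $\hat\omega_{\bar z}$ in terms of $g_{\bar z}$, something like $\hat\omega_{\bar z}=-\frac{2\bar g}{1+|g|^2}\,g_{\bar z}\,\hat\omega$ after substitution. I expect the clean form to come from the conjugated equation, for instance $g_{\bar z}=(1+|g|^2)^{-1}\ldots\bar{\hat\omega}$, which ties $\hat\omega$ to $g_{\bar z}$ via $\bar{\hat\omega}=c\,g_z/(1+|g|^2)^2$-type identities. Nowhere verticality, $|g|^2\neq-1$, lets us divide by $1+|g|^2$ using Proposition \ref{prop: inverse}.

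\textbf{Step 3: derive the harmonic map equation.} Differentiate the first-order relation between $g_z$ and $\bar{\hat\omega}$ in $\bar z$. Substitute the expression for $\bar{\hat\omega}_{\bar z}$, which is the conjugate of the $\hat\omega_z$ equation, or equivalently use the integrability condition $(f_z)_{\bar z}=(f_{\bar z})_z$. The $\hat\omega$ terms should cancel, leaving $g_{z\bar z}-\frac{2\bar g}{1+|g|^2}g_zg_{\bar z}=0$.

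The main obstacle is the bookkeeping in Steps 2--3. The difficulties are sign conventions from $j^2=1$, the Lorentzian signs in the connection, and identifying which combination isolates $\hat\omega$ cleanly. The first thing to try is to write everything in the null combinations $\phi^1\pm j\phi^2$. A useful sanity check is the duality with timelike CMC surfaces in $\L$, where the Gauss map equation has exactly this form.
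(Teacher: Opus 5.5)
Your route is the paper's: there, $\nabla_z f_{\bar z}=\frac12 He^uN$ is expanded in the frame $E_k$, solved for $\hat\omega_{\bar z}$ and $g_{\bar z}$, specialized to $H=0$, and differentiated once more. Your Step~1 is correct: the connection terms and the three component equations check out, and the combinations $\phi^1\mp j\phi^2$ are the right device. However, Steps~2--3 stay at the level of guesses, and the last step as you set it up does not close. You tie $\overline{\hat\omega}$ to $g_z$, and then Step~3 requires $\overline{\hat\omega}_{\bar z}$, i.e.\ $\hat\omega_z$. Nothing determines $\hat\omega_z$. The system $\nabla_{\bar z}f_z=0$ only controls $\hat\omega_{\bar z}$ and $g_{\bar z}$ (hence $(\overline{\hat\omega})_z$ and $\bar g_z$). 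The integrability condition you invoke adds nothing, since $\nabla_{\bar z}f_z=\nabla_z f_{\bar z}$ holds automatically by torsion-freeness.

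Concretely, the $\phi^1-j\phi^2$ equation gives $\hat\omega_{\bar z}=-2j|\hat\omega|^2(1+|g|^2)\bar g$. The $E_3$ equation gives $(g\hat\omega)_{\bar z}=-j|\hat\omega|^2(|g|^4-1)$. Subtracting $g$ times the former yields $\hat\omega\, g_{\bar z}=j(1+|g|^2)^2|\hat\omega|^2$. Because $f$ is immersed, $g_+(f_z,f_{\bar z})=-2(1-|g|^2)^2|\hat\omega|^2/\tau^2\neq 0$, so $\hat\omega$ is invertible by Proposition~\ref{prop: inverse}. This division, not the one by $1+|g|^2$, is what Step~2 needs, and it gives $g_{\bar z}=j(1+|g|^2)^2\overline{\hat\omega}$, which is \eqref{eq: Dirac equation3}. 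Your guess $\hat\omega_{\bar z}=-\frac{2\bar g}{1+|g|^2}g_{\bar z}\hat\omega$ is then a correct rewriting. Now differentiate the $g_{\bar z}$-relation in $z$, not $\bar z$, using $(\overline{\hat\omega})_z=\overline{\hat\omega_{\bar z}}=2j|\hat\omega|^2(1+|g|^2)g$. Then eliminate $\hat\omega$ via $\overline{\hat\omega}=j g_{\bar z}/(1+|g|^2)^2$ and $|\hat\omega|^2=-g_{\bar z}\bar g_z/(1+|g|^2)^4$; this is where $|g|^2\neq-1$ is used:
\[
g_{z\bar z}=\frac{2(\bar g g_z+g\bar g_z)g_{\bar z}}{1+|g|^2}-\frac{2g\bar g_z g_{\bar z}}{1+|g|^2}=\frac{2\bar g}{1+|g|^2}g_z g_{\bar z}.
\]
Equivalently, you can differentiate the conjugate relation $\bar g_z=-j(1+|g|^2)^2\hat\omega$ in $\bar z$. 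That uses only $\hat\omega_{\bar z}$ and produces the conjugate of the desired equation.
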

\begin{proof}
First, we show the relations between $g$ and $\hat\omega$. The Gauss--Weingarten formulas are given by
\begin{align}\label{eq: Gauss Weingarten}
\nabla_{z} f_z = u_z f_z + QN,
\quad
\nabla_z f_{\bar{z}} - \nabla_{\bar{z}} f_z =0,
\quad
\nabla_z f_{\bar{z}} + \nabla_{\bar{z}} f_z =H e^u N,
\end{align}
where $e^u$, $H$, and $N$ denote the conformal factor, the mean curvature, and the unit normal vector field of $f$, respectively.
The first equation in \eqref{eq: Gauss Weingarten} defines the Hopf differential $Q dz^2$ of $f$, and hence the Gauss--Weingarten formulas \eqref{eq: Gauss Weingarten} are reduced to
\begin{equation}\label{eq: Gauss Weingarten2}
\nabla_{z} f_{\bar{z}} = \frac12 H e^u N.
\end{equation}
By using $g$ and $\hat\omega$, each terms of \eqref{eq: Gauss Weingarten2}, with respect to $E_k$ $(k=1,2,3)$, can be represented into
\begin{equation}\label{eq: GW E1}
\frac1\tau \left( ({\bar{g}}^2 +1) \overline{\hat\omega} \right)_z - \frac{2j}{\tau} (1+|g|^2) (g-\bar{g})|\hat\omega|^2
=
-\frac{2H}{\tau^2} (1-|g|^2) (g+\bar{g}) |\hat\omega|^2,
\end{equation}
\begin{equation}\label{eq: GW E2}
-\frac1\tau \left( j(\bar{g}^2-1)\overline{\hat\omega} \right)_z -\frac{2}{\tau} (1+|g|^2) (g+\bar{g}) |\hat\omega|^2
=
-\frac{2jH}{\tau^2} (1-|g|^2) (g-\bar{g}) |\hat\omega|^2,
\end{equation}
\begin{equation}
\frac{2}{\tau} \left( \bar{g}\overline{\hat\omega} \right)_z + \frac{2j}{\tau} (1+|g|^2) (1-|g|^2) |\hat\omega|^2
=
-\frac{2H}{\tau^2} (1-|g|^2) (1+|g|^2) |\hat\omega|^2.
\end{equation}
Canceling $\bar{g}_z$ from $E_1$ term \eqref{eq: GW E1} and $E_2$ term \eqref{eq: GW E2},
we obtain
\begin{equation}\label{eq: Dirac equation1}
\hat\omega_{\bar{z}} = \frac{2|\hat\omega|^2}{\tau} \left( -H (1-|g|^2) - j\tau (1+|g|^2) \right) \bar{g}.
\end{equation}
Moreover,  substituting \eqref{eq: Dirac equation1} into \eqref{eq: GW E1} or \eqref{eq: GW E2} derives
\begin{equation}\label{eq: Dirac equation2}
g_{\bar{z}} = -\frac{1}{\tau} \left( H(1-|g|^2)^2 -j \tau (1+|g|^2)^2 \right) \overline{\hat\omega}.
\end{equation}
In the case of timelike minimal surfaces, the Gauss--Weingarten formulas \eqref{eq: Dirac equation1} and \eqref{eq: Dirac equation2} are more simple as
\begin{equation}\label{eq: Dirac equation3}
\hat\omega_{\bar{z}} = -2j |\hat\omega|^2 (1+|g|^2) \bar{g},
\quad
g_{\bar{z}} = j (1+|g|^2)^2 \overline{\hat\omega}.
\end{equation}
Finally, the equation \eqref{eq: harmonic1} can be derived by calculating $g_{z \bar{z}}$ with the above relations.
\begin{align}
g_{z\bar{z}}
=&
2j (1+|g|^2)(g_z \bar{g} + g \bar{g}_z) \overline{\hat\omega} + j (1+|g|^2)^2 \left( 2j |\hat\omega|^2 (1+|g|^2) g \right)\\
=&
2j (1+|g|^2) (g_z \bar{g} + g \bar{g}_z) \frac{j g_{\bar{z}}}{(1+|g|^2)^2} + 2 (1+|g|^2)^3 g \left( \frac{-g_{\bar{z}} \bar{g}_{z}}{(1+|g|^2)^4} \right)\\
=&
\frac{2}{(1+|g|^2)} \left( \bar{g} g_z g_{\bar{z}} + g g_{\bar{z}} \bar{g}_z \right) - \frac{2}{1+|g|^2} \left( gg_{\bar{z}} \bar{g}_z \right)\\
=&
\frac{2\bar{g}}{1+|g|^2} g_z g_{\bar{z}}.
\end{align}
\end{proof}

\begin{lemma}\label{lem: harmonic2}
Let $\nu$ be a map $\nu \colon \C' \supset D \to \S^2_1 \subset \L_{+-+}$ whose third component does not take $-1$. Then $\nu$ is Lorentzian harmonic if and only if the composition $g:=\pi \circ \nu$ of $\nu$ and the stereographic projection $\pi$ from ${}^t(0,0,-1) \in \L_{+-+}$ satisfies \eqref{eq: harmonic1}.
\end{lemma}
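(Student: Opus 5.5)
The plan is to write $\nu$ explicitly in terms of $g$ through the inverse of the stereographic projection, and then compare the harmonic map equation for $\nu$ with \eqref{eq: harmonic1} component by component. From the formula for $f^{-1}N$ we read off
\[
\nu=\frac{1}{1-|g|^2}\bigl(g+\bar g,\; j(g-\bar g),\; 1+|g|^2\bigr).
\]
Here $|g|^2\neq 1$ exactly because the third component of $\nu$ is never $-1$, and $|g|^2\neq -1$ is needed for \eqref{eq: harmonic1} to make sense. A map $\nu\colon D\to\S^2_1\subset\L_{+-+}$ is Lorentzian harmonic if and only if $\nu_{z\bar z}$ is parallel to $\nu$, that is, $\nu_{z\bar z}=\lambda\nu$ with $\lambda=-\langle\nu_z,\nu_{\bar z}\rangle$. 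Equivalently, the tangential part of $\nu_{z\bar z}$ vanishes.

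To avoid a brute-force computation of $\nu_{z\bar z}$, I would use the conformality of the stereographic projection. One checks that $\langle d\nu,d\nu\rangle = \rho^2\, dg\,d\bar g$ with
\[
\rho^2=\frac{4}{(1-|g|^2)^2}
\]
up to a sign convention, so that $\pi$ identifies $\S^2_1$ minus the pole with a paracomplex domain carrying the metric $\rho^2|dw|^2$. For a conformal metric $\rho^2\,dw\,d\bar w$ on a (para)complex coordinate, the only nonzero Christoffel symbol is $\Gamma^w_{ww}=2(\log\rho)_w$. The tension field of $g$ is therefore proportional to
\[
g_{z\bar z}+2(\log\rho)_w(g)\,g_zg_{\bar z}.
\]
In our case $(\log\rho)_w=\bar w/(1-|w|^2)$, which would give a coefficient $+2\bar g/(1-|g|^2)$ in front of $g_zg_{\bar z}$. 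This does not match \eqref{eq: harmonic1}, whose denominator is $1+|g|^2$.

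The main obstacle is this sign bookkeeping, and I must recompute carefully. The Lorentzian norm on $\L_{+-+}$ is $X_1^2-X_2^2+X_3^2$. The identity $|\nu|^2=1$ must then be rechecked with $j^2=1$. Since $(j(g-\bar g))^2=(g-\bar g)^2$, we get
\[
(g+\bar g)^2-(g-\bar g)^2=4|g|^2,
\]
so the sphere condition reads $4|g|^2+(1+|g|^2)^2=(1-|g|^2)^2$. This fails, which means the correct inverse projection has denominator $1+|g|^2$. Presumably the paper intends it: on $\S^2_1$ with $e_1$ timelike, the formula should read $\nu=(g+\bar g,\,j(g-\bar g),\,1-|g|^2)/(1+|g|^2)$, matching the paper's convention of $E_1$ timelike.

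So the first step is to fix the correct inverse $\pi^{-1}$ and verify $\langle\nu,\nu\rangle=1$. The second step is to compute the pullback metric, which gives $\rho^2=4/(1+|g|^2)^2$ and $(\log\rho)_w=-\bar w/(1+|w|^2)$. This reproduces exactly the coefficient $-2\bar g/(1+|g|^2)$ in \eqref{eq: harmonic1}. The third step is to conclude that, since $\pi$ is a diffeomorphism onto its image, $\nu$ is harmonic if and only if $\pi\circ\nu$ is harmonic into $(\C',\rho^2\,dw\,d\bar w)$. That condition is exactly \eqref{eq: harmonic1}, because $\tau(g)=4\rho^{-2}\bigl(g_{z\bar z}+\Gamma^w_{ww}g_zg_{\bar z}\bigr)$, where the mixed Christoffel symbols vanish by conformality. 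The conjugate component gives the conjugate equation automatically.
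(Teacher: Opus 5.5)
Your argument is correct, and it takes a genuinely different route from the paper's.

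\textbf{The paper's route.} The paper stays extrinsic. It uses the criterion that $\nu$ is Lorentzian harmonic if and only if $\nu_{z\bar z}$ is a multiple of $\nu$, and it uses exactly your corrected parametrization $\nu=(g+\bar g,\,j(g-\bar g),\,1-|g|^2)/(1+|g|^2)$. It then computes $\nu_{z\bar z}$ by brute force. The result is a multiple of $\nu$ plus the tangent vectors ${}^t(1-\bar g^2,\,j(1+\bar g^2),\,-2\bar g)$ and its conjugate. Their coefficients are $(1+|g|^2)^{-2}$ times the left side of \eqref{eq: harmonic1} and its conjugate.

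\textbf{Your route.} You pull the de Sitter metric back by $\pi^{-1}$ to $4\,dw\,d\bar w/(1+|w|^2)^2$ on $\C'\setminus\{|w|^2=-1\}$, which checks out by a short computation. You then invoke isometry invariance of harmonicity and read the equation off the single symbol $\Gamma^w_{ww}=\partial_w\log\rho^2=-2\bar w/(1+|w|^2)$.

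\textbf{What each buys.} Your route avoids all second-derivative bookkeeping and explains where the coefficient in \eqref{eq: harmonic1} comes from. It also explains your detour. The vector $f^{-1}N$, with denominator $1-|g|^2$, is a unit vector for $g_+$, which has signature $(-,+,+)$. So it lies on the other model of $\S^2_1$, whose stereographic metric is $-4\,dw\,d\bar w/(1-|w|^2)^2$. Both models project to the same $g$ under $\pi$, and the lemma refers to the $(+,-,+)$ model.

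The price is that you must justify the paracomplex Christoffel formula rather than import it from the complex case. It does hold. Write the real symbols of $e^{2\phi}(du^2-dv^2)$ and recombine them with $W=U+jV$, $\partial_w=\tfrac12(\partial_u+j\partial_v)$ and $j^2=1$. This gives the tension $\Box W+2\phi_w\langle dW,dW\rangle$, which equals $4(g_{z\bar z}+2\phi_w g_zg_{\bar z})$ for the domain metric $dz\,d\bar z$. You should also write out the pullback computation rather than assert it up to sign. The paper's computation needs neither of these, and it also exhibits the normal coefficient of $\nu_{z\bar z}$ explicitly.

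\textbf{Two minor slips.}
\begin{enumerate}
\item The hypothesis that the third component $X_3$ of $\nu$ avoids $-1$ is what makes $g=\pi\circ\nu$ defined. On $\S^2_1\subset\L_{+-+}$ it gives $1+|g|^2=2/(1+X_3)\neq 0$, so $|g|^2\neq -1$ is automatic and need not be assumed. It has nothing to do with $|g|^2\neq 1$; that locus is the circle $X_3=0$, which is allowed.
\item The scalar in front of the tension field comes from the domain's conformal factor, not from $\rho^{-2}$. This does not affect the vanishing condition.
\end{enumerate}
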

\begin{proof}
It is known that a map from $\C'$ into $\S^2_1$ is Lorentzian harmonic if and only if its d'Alembertian is linearly dependent on itself (see \cite{DIT}), that is, $\nu_{z \bar{z}} = \rho \nu$ holds with some function $\rho$.
The map $\nu$ is given in terms of $g$ by
\[
\nu= \frac{1}{1+|g|^2} \begin{pmatrix} g+\bar{g}\\ j(g-\bar{g})\\ 1-|g|^2 \end{pmatrix},
\]
and then a straightforward computation shows that
\begin{align}
\nu_{\bar{z}}
=
\frac{g_{\bar{z}}}{(1+|g|^2)^2}
\begin{pmatrix} 1-\bar{g}^2 \\ j (1+{\bar{g}}^2) \\ -2 \bar{g} \end{pmatrix}
+
\frac{\bar{g}_{\bar{z}}}{(1+|g|^2)^2} 
\begin{pmatrix} 1-g^2 \\ -j  (1+g^2) \\ -2g \end{pmatrix},
\end{align}
\begin{align}
\nu_{z \bar{z}}
%=&
%\frac{1}{(1+|g|^2)^2} \left( g_{z \bar{z}} - \frac{2\bar{g}}{1+|g|^2}g_z g_{\bar{z}} - \frac{2g}{1+|g|^2} \bar{g}_z g_{\bar{z}}\right)
%\begin{pmatrix} 1-\bar{g}^2 \\ j (1+{\bar{g}}^2) \\ -2 \bar{g} \end{pmatrix}\\
%&+
%\frac{1}{(1+|g|^2)^2}
%\left( \bar{g}_{z \bar{z}} - \frac{2g}{1+|g|^2} \bar{g}_z \bar{g}_{\bar{z}} - \frac{2\bar{g}}{1+|g|^2} g_z \bar{g}_{\bar{z}}\right)
%\begin{pmatrix} 1-g^2 \\ -j  (1+g^2) \\ -2g \end{pmatrix}\\
%&+
%\frac{g_{\bar{z}}}{(1+|g|^2)^2} \begin{pmatrix} -2\bar{g} \bar{g}_z \\ 2j \bar{g} \bar{g}_z \\ -2\bar{g}_z \end{pmatrix}
%+
%\frac{\bar{g}_{\bar{z}}}{(1+|g|^2)^2} \begin{pmatrix} -2g g_z \\ -2j g g_z \\ -2g_z \end{pmatrix}\\
%
%=&
%\frac{1}{(1+|g|^2)^2} \left( g_{z \bar{z}} - \frac{2\bar{g}}{1+|g|^2}g_z g_{\bar{z}} \right)
%\begin{pmatrix} 1-\bar{g}^2 \\ j (1+{\bar{g}}^2) \\ -2 \bar{g} \end{pmatrix}
%+\frac{1}{(1+|g|^2)^2}
%\left( \bar{g}_{z \bar{z}} - \frac{2g}{1+|g|^2} \bar{g}_z \bar{g}_{\bar{z}} \right)
%\begin{pmatrix} 1-g^2 \\ -j  (1+g^2) \\ -2g \end{pmatrix}\\
%&+ \frac{2g_{\bar{z}} \bar{g}_{z}}{(1+|g|^2)^3} \begin{pmatrix} -g(1-\bar{g}^2) - \bar{g}(1+|g|^2) \\ -jg(1+\bar{g}^2) + j\bar{g}(1+|g|^2) \\ 2|g|^2 -(1+|g|^2) \end{pmatrix}
%+ \frac{2g_z\bar{g}_{\bar{z}}}{(1+|g|^2)^3} \begin{pmatrix} -\bar{g}(1-g^2) - g(1+|g|^2) \\ j\bar{g} (1+g^2) -j g(1+|g|^2) \\ 2|g|^2 -(1+|g|^2) %\end{pmatrix}\\
%
=&
\frac{1}{(1+|g|^2)^2} \left( g_{z \bar{z}} - \frac{2\bar{g}}{1+|g|^2}g_z g_{\bar{z}} \right)
\begin{pmatrix} 1-\bar{g}^2 \\ j (1+{\bar{g}}^2) \\ -2 \bar{g} \end{pmatrix}\\
&+\frac{1}{(1+|g|^2)^2}
\left( \bar{g}_{z \bar{z}} - \frac{2g}{1+|g|^2} \bar{g}_z \bar{g}_{\bar{z}} \right)
\begin{pmatrix} 1-g^2 \\ -j  (1+g^2) \\ -2g \end{pmatrix}\\
&-
\frac{2g_{\bar{z}} \bar{g}_{z}}{(1+|g|^2)^3} \begin{pmatrix} g+\bar{g}\\ j(g-\bar{g})\\ 1-|g|^2 \end{pmatrix}
-
\frac{2g_z\bar{g}_{\bar{z}}}{(1+|g|^2)^3} \begin{pmatrix} g+\bar{g}\\ j(g-\bar{g})\\ 1-|g|^2 \end{pmatrix}.
\end{align}
Since, at each point, the vector
\begin{align}
&\frac{1}{(1+|g|^2)^2} \left( g_{z \bar{z}} - \frac{2\bar{g}}{1+|g|^2}g_z g_{\bar{z}} \right)
\begin{pmatrix} 1-\bar{g}^2 \\ j (1+{\bar{g}}^2) \\ -2 \bar{g} \end{pmatrix}\\
&+\frac{1}{(1+|g|^2)^2}
\left( \bar{g}_{z \bar{z}} - \frac{2g}{1+|g|^2} \bar{g}_z \bar{g}_{\bar{z}} \right)
\begin{pmatrix} 1-g^2 \\ -j  (1+g^2) \\ -2g \end{pmatrix}
\end{align}
is linearly independent of $\nu$, we can see that $\nu_{z \bar{z}}$ lies on the direction of $\nu$ if and only if \eqref{eq: harmonic1} holds.
\end{proof}

\begin{theorem}[c.f. \cite{KK}]\label{thm: identify TMS}
Let $f \colon \C' \supset D \to \Nil(\tau)$ be a nowhere vertical timelike minimal surface defined on a simply connected domain $D$. Then the normal Gauss map $g$ of $f$ is Lorentzian harmonic with respect to the metric of the de Sitter $2$-sphere $\S^2_1 \subset \L_{+-+}$. Conversely, for any Lorentzian harmonic map $g \colon D \to \C' \setminus \{w \ |\  |w|^2=-1\}$ with respect to the metric of $\S^2_1 \subset \L_{+-+}$, let $\omega=\hat\omega dz$ be a $1$-form defined by $\hat\omega = -j \bar{g}_{z} / (1+|g|^2)^2$.
Then the map $f={}^t(f^1,f^2,f^3) \colon D \to \Nil(\tau)$ defined by
\begin{align}\label{eq: TMS from g}
f^1(z) =& \frac{2}{\tau}\Re \int^z (g^2+1)\omega,\\
f^2(z) =& \frac{2}{\tau}\Re \int^z j(g^2-1) \omega,\label{eq: representation formula}\\
f^3(z) =& \frac{2}{\tau}\Re \int^z \left(2g -\tau f^2(g^2+1)+\tau f^1 j(g^2-1) \right) \omega
\end{align}
is a timelike surface, possibly with singular points, and has zero mean curvature at regular points.
\end{theorem}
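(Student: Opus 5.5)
The forward direction follows directly from the two lemmas above. If $f$ is a nowhere vertical timelike minimal surface, Lemma \ref{lem: harmonic1} shows that its normal Gauss map $g$ satisfies \eqref{eq: harmonic1}. The condition $|g|^2\neq -1$ says exactly that $\nu=\pi^{-1}\circ g$ avoids ${}^t(0,0,-1)$. Lemma \ref{lem: harmonic2} then gives that $\nu$, and hence $g$ viewed in $\S^2_1\subset\L_{+-+}$, is Lorentzian harmonic. Along the way I will record the relation \eqref{eq: Dirac equation3}, $g_{\bar z}=j(1+|g|^2)^2\overline{\hat\omega}$. Conjugating it gives $\hat\omega=-j\bar g_z/(1+|g|^2)^2$, since $\bar j=-j$. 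So the $\hat\omega$ defined in the converse is the same one that appears in Proposition \ref{prop: derivative}.

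For the converse, the plan is to reverse this argument.

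\textbf{Step 1 (closedness of the first two integrands).} Start from a Lorentzian harmonic $g$, so that \eqref{eq: harmonic1} holds by Lemma \ref{lem: harmonic2}, and set $\hat\omega:=-j\bar g_z/(1+|g|^2)^2$. First verify the pair \eqref{eq: Dirac equation3}. The second equation holds by definition. For the first, compute $\hat\omega_{\bar z}$ and use the conjugate of \eqref{eq: harmonic1}, namely $\bar g_{z\bar z}=2g\bar g_z\bar g_{\bar z}/(1+|g|^2)$. Together with the identity $g_{\bar z}\bar g_z=-|\hat\omega|^2(1+|g|^2)^4$, which comes from the definition, this yields $\hat\omega_{\bar z}=-2j|\hat\omega|^2(1+|g|^2)\bar g$.

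Next, write the $1$-forms $\alpha^1=(g^2+1)\omega$ and $\alpha^2=j(g^2-1)\omega$. Since $D$ is simply connected, $\Re\int\alpha^k$ is well defined once $\Im\bigl((g^2+1)\hat\omega\bigr)_{\bar z}=0$, and similarly for $\alpha^2$. Using \eqref{eq: Dirac equation3},
\[
\bigl((g^2+1)\hat\omega\bigr)_{\bar z}=2g g_{\bar z}\hat\omega+(g^2+1)\hat\omega_{\bar z}=2j|\hat\omega|^2(1+|g|^2)\bigl(g(1+|g|^2)-(g^2+1)\bar g\bigr).
\]
The last bracket equals $g-\bar g$, which is purely $j$-imaginary. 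Hence the whole expression is real, and the integrability condition holds. The same computation works for $\alpha^2$.

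\textbf{Step 2 (the third coordinate).} The third integrand contains $f^1,f^2$, so the condition to check is $\Re(\partial_{\bar z}\beta)$-symmetry for $\beta=(2g-\tau f^2(g^2+1)+\tau f^1j(g^2-1))\hat\omega$. This requires $\partial_{\bar z}\beta\in\R$, and it is where the contact-form correction enters. Using $f^1_{\bar z}=\frac1\tau\overline{(g^2+1)\hat\omega}$ and $f^2_{\bar z}=\frac1\tau\overline{j(g^2-1)\hat\omega}$, the extra terms combine into something of the form $|\hat\omega|^2$ times a combination of $(g^2+1)\overline{(g^2-1)}$ and its conjugate. This combination should cancel the $j$-part coming from $(2g\hat\omega)_{\bar z}$. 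I expect this bookkeeping to be the main obstacle, and I would organise it by splitting everything into real and $j$-parts.

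\textbf{Step 3 (the surface is timelike minimal).} Once $f$ is defined, express $f_z$ in the frame $E_k$. Since $\eta(\tau)(f_z)=f^3_z+\tau(f^2f^1_z-f^1f^2_z)$, the correction term is designed so that $f_z$ has exactly the form \eqref{eq: derivative}. Then $g_+(f_z,f_z)=0$, so $f$ is conformal wherever it is an immersion. At those points its conformal factor is a nonzero multiple of $|\hat\omega|^2(1+|g|^2)^2$, of the sign making $f$ timelike, and $g$ is its normal Gauss map. Singular points are exactly where this factor vanishes.

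Finally, at regular points run the derivation of \eqref{eq: Dirac equation2} backwards. That formula expresses $g_{\bar z}$ in terms of $H$, and comparing it with $g_{\bar z}=j(1+|g|^2)^2\overline{\hat\omega}$ gives $H(1-|g|^2)^2\overline{\hat\omega}=0$. I then need $H=0$ even where $|g|^2=1$. I would handle this by using the $E_3$-component equation (or \eqref{eq: Dirac equation1}) instead, whose coefficient of $H$ is proportional to $(1-|g|^2)(1+|g|^2)$. Alternatively, since $H$ is continuous, the identity $H=0$ on the dense set where $|g|^2\neq 1$ extends to all regular points.
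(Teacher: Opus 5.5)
Your route is the paper's. The forward direction comes from Lemmas \ref{lem: harmonic1} and \ref{lem: harmonic2}. For the converse you use \eqref{eq: Dirac equation3}, the form \eqref{eq: derivative} of $f_z$ (the correction term does make $\eta(\tau)(f_z)=2g\hat\omega/\tau$), conformality, and a Gauss--Weingarten comparison. You differ in two small ways: you check closedness of the three integrands, which the paper takes for granted, and you read $H$ off from \eqref{eq: Dirac equation2} rather than computing $\nabla_z f_{\bar z}=0$ directly. Since \eqref{eq: Dirac equation2} is extracted from the components of \eqref{eq: Gauss Weingarten2}, the two minimality arguments are equivalent. Your Step 2 does close. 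The terms $\tau f^1\bigl(j(g^2-1)\hat\omega\bigr)_{\bar z}$ and $-\tau f^2\bigl((g^2+1)\hat\omega\bigr)_{\bar z}$ are real by Step 1. Set $a=(g^2+1)\hat\omega$ and $b=j(g^2-1)\hat\omega$, so that $\tau f^1_{\bar z}=\bar a$ and $\tau f^2_{\bar z}=\bar b$. Then the remaining terms give
\[
\bar a b-a\bar b=2j\,\Im(\bar a b)=2j|\hat\omega|^2(|g|^4-1),
\]
which cancels $(2g\hat\omega)_{\bar z}=2j|\hat\omega|^2(1-|g|^4)$.

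There is, however, a computational error in Step 3. The conformal factor of $f$ is not a multiple of $|\hat\omega|^2(1+|g|^2)^2$; that is the conformal factor of $f_L$ (cf.\ Table \ref{tb: similarities}). The correct computation gives $g_+(f_z,f_{\bar z})=-2(1-|g|^2)^2|\hat\omega|^2/\tau^2$. Hence your claim that singular points are exactly the zeros of $|\hat\omega|^2$ is false. Points with $|g|^2=1$ are singular too: there $f_z=\frac{g\hat\omega}{\tau}\bigl((g+\bar g)E_1+j(g-\bar g)E_2+2E_3\bigr)$ is a paracomplex multiple of a single real vector, so $df$ has rank at most one.

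With the correct factor, at every regular point both $(1-|g|^2)^2$ and $|\hat\omega|^2$ are nonzero. Then $\overline{\hat\omega}$ is invertible by Proposition \ref{prop: inverse}, and $H(1-|g|^2)^2\overline{\hat\omega}=0$ gives $H=0$ immediately. So your last paragraph is unnecessary. Its $E_3$-component suggestion would not have worked anyway, since the coefficient of $H$ there, $(1-|g|^2)(1+|g|^2)$, also vanishes when $|g|^2=1$. Likewise, no sign condition is needed for timelikeness: any nonzero real multiple of $dz\,d\bar z=dx^2-dy^2$ is Lorentzian.
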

\begin{proof}
The first assertion follows from Lemmas \ref{lem: harmonic1} and \ref{lem: harmonic2}, and we now prove the latter. 
Let $g$ be a Lorentzian harmonic map with respect to the metric of $\S^2_1$, that is, $g$ satisfies \eqref{eq: harmonic1}. 
It is straightforward that $g$ and $\hat\omega$ satisfy the relations \eqref{eq: Dirac equation3}. 
Then direct computations show that
\begin{align}
g_+(f_z, f_{\bar{z}}) = -\frac{2 (1-|g|^2)^2 |\hat\omega|^2}{\tau^2},
\quad
g_+(f_z, f_z) = 0,
\end{align}
and hence, locally, $f$ is a conformal immersion.
%away from $\{z \in D \mid |g|^2=1 \ \text{or} \ |\hat\omega|^2=0\}$. 
Moreover, \eqref{eq: Dirac equation3} yields $\nabla_z f_{\bar{z}} = 0$, 
and comparison with \eqref{eq: Gauss Weingarten2} completes the proof.
\end{proof}

\subsection{Duality between timelike minimal surfaces in $\Nil(\tau)$ and timelike constant mean curvature surfaces in $\L$}\label{subsec: correspondence}
Theorem \ref{thm: identify TMS} implies that nowhere vertical timelike minimal surfaces can be identified with Lorentzian harmonic maps. Therefore, as in \cite{KK}, there exists a duality between nowhere vertical timelike minimal surfaces in $\Nil(\tau)$ and constant mean curvature surfaces in $\L_{+-+}$. For convenience, we would like to discuss surfaces in $\L_{-++}$ where the semi-Euclidean metric is denoted by $\langle, \rangle$, that is,
\[
\left\langle \begin{pmatrix}x^1\\ x^2\\ x^3\end{pmatrix}, \begin{pmatrix}y^1\\ y^2\\ y^3 \end{pmatrix} \right\rangle = -x^1y^1 + x^2y^2 + x^3y^3.
\]

The corresponding surfaces in $\L_{-++}$ to timelike minimal surfaces in $\Nil(\tau)$ with harmonic map $g$ are given as follows. For a Lorentzian harmonic map $g$ satisfying \eqref{eq: harmonic1}, define a map
\begin{equation}\label{eq: TCMC representation}
f_L = \frac{2}{\tau}\Re \int^z \begin{pmatrix} g^2+1\\j(g^2-1)\\2jg \end{pmatrix}\omega.
\end{equation}
Then $f_L$ is a timelike surface in $\L_{-++}$ with the unit normal vector field
\[
N_L = \frac{-1}{1+|g|^2} \begin{pmatrix} j (g-\bar{g}) \\ g+\bar{g} \\ 1-|g|^2 \end{pmatrix}.
\]
A straightforward computation yields that $f_L$ has the constant mean curvature $\tau$ with respect to $N_L$.
Furthermore, the projective Gauss map $\pi_L \circ N_L$ via the stereographic projection $\pi_L$ from ${}^t(0,0,1)$ is given by $-jg$.

Note that the timelike minimal surface $f$ in $\Nil(\tau)$ with normal Gauss map $g$ and the unit normal vector field $N$ are constructed by \eqref{eq: unit normal} and \eqref{eq: representation formula}$:$
\begin{equation}\label{eq: df}
f= \begin{pmatrix}f^1\\f^2\\f^3 \end{pmatrix}
= \frac{2}{\tau}\Re \int^z\begin{pmatrix}  g^2+1\\
  j(g^2-1) \\
  2g -\tau f^2(g^2+1)+\tau f^1 j(g^2-1) \end{pmatrix} \omega,
\end{equation}
\[
N=\frac{g+\bar{g} }{1-|g|^2} E_1+ \frac{j(g-\bar{g}) }{1-|g|^2} E_2+ \frac{1+|g|^2}{1-|g|^2} E_3.
\]
In particular, we can see that $f$ and $f_L$ share the derivative with respect to $z$, that is,
\begin{equation}\label{eq: relation}
f_z = \phi^1E_1 + \phi^2E_2 + \phi^3E_3, \quad ({f_L})_z ={}^t(\phi^1, \phi^2, j\phi^3)
\end{equation}
holds.
Moreover, $f_L$ has constant mean curvature $\tau$ and has the Gauss map $-jg$ via the stereographic projection from ${}^t(0,0,1)$. Furthermore, straightforward computations show that its first and second fundamental forms are given by
\begin{align}
\langle df_L, df_L \rangle=-\frac{4(1+|g|^2)^2 |\omega|^2}{\tau^2},
\quad
-\langle df_L, dN_L \rangle= Q_Ldz^2 
-\frac{4 (1+|g|^2)^2 |\omega|^2}{\tau}  
+ \overline{Q_L}d\bar{z}^2,
\end{align}
where $Q_Ldz^2=\langle (f_L)_{zz}, N_L \rangle dz^2$ is the Hopf differential of $f_L$. In terms of $g$ and $\hat\omega$, $Q_L$ is denoted by
\[
Q_L = -2jg_z \frac{\hat\omega}{\tau}.
\]
\begin{remark}
In this paper, to establish a correspondence between timelike minimal surfaces in $\Nil(\tau)$ and timelike CMC $\tau$ surfaces in $\L_{-++}$, we adopt the positively oriented normal $N$ for surfaces in $\Nil(\tau)$ and the negatively oriented normal $N_L$ for those in $\L_{-++}$. This choice is necessary because the sign of the mean curvature depends on the orientation of the normal vector.
\end{remark}

For timelike minimal surfaces in $\Nil(\tau)$, the term {\it vertical} refers to the condition that the vector $E_3$ is tangent to the surfaces, that is, the unit normal vector fields have no $E_3$ component. In contrast, on the corresponding surfaces in $\L$, it denotes points where the induced metric degenerates.

For the remainder of this section, we discuss the counterparts of the Hopf differential on timelike surfaces in $\Nil(\tau)$. As is well known, the Hopf differential $Qdz^2$ is holomorphic for constant mean curvature timelike surfaces in $\L$.  In \cite{AR2004}, an analogous quadratic form of the Hopf differential on surfaces in $3$-dimensional homogeneous spaces with $4$-dimensional isometry groups is introduced. Such a quadratic form is holomorphic for constant mean curvature surfaces, and is called {\it generalized Hopf differential} or {\it Abresch--Rosenberg differential}.
In $\Nil(\tau)$, the Abresch--Rosenberg differential $Q_{AR}dz^2$ for timelike surfaces is defined by
\[
Q_{AR} = \frac{H - j\tau}{2} Q - \tau^2 g_+(f_z, E_3)^2,
\]
where $Qdz^2$ is the Hopf differential of the timelike surface in $\Nil(\tau)$.
It is easy to verify that the Abresch--Rosenberg differential is independent of the choice of conformal coordinate system, and hence it can be defined entirely on surfaces. For timelike minimal surfaces, $Q_{AR}$ is expressed in terms of $g$ and $\hat\omega$ as follows$:$
\begin{align}
Q_{AR}
=&
\frac{0 - j\tau}{2} \left( 2g_z \frac{\hat\omega}{\tau} - 2j\tau \left( 2g\frac{\hat\omega}{\tau} \right)^2 \right) - \tau^2 \left( 2g\frac{\hat\omega}{\tau} \right)^2\\
=&
%0 \left( g_z \frac{\hat\omega}{\tau} -j \tau \left( 2g \frac{\hat\omega}{\tau} \right)^2 \right) 
-j g_z \hat\omega.
\end{align}
In particular, the Abresch--Rosenberg differential of a nowhere vertical timelike minimal surface in $\Nil(\tau)$ equals $\tau/2$ times the Hopf differential of the corresponding timelike constant mean curvature $\tau$ surface (Table \ref{tb: similarities}).

\begin{table}[htbp]
  \centering
  \begin{tabular}{|l|c|c|c|c|} \hline
      &induced &  3rd. comp.  & paraholomorphic  & (normal)   \\ 
    & metric &of $N_{(L)}$ & differential & Gauss map   \\ \hline
    $H=0$ in $\Nil(\tau)$ & $-\dfrac{4(1-|g|^2)^2 |\omega|^2 }{ \tau^2} $ & $\cfrac{1+|g|^2}{1-|g|^2}$ & $-j g_z \hat\omega dz^2$ & $g$  \\ \hline
    $H=\tau$ in $\L_{-++}$& $-\dfrac{4(1+|g|^2)^2 |\omega|^2} {\tau^2} $  & $-\cfrac{1-|g|^2}{1+|g|^2}$ & $-\cfrac{2jg_z \hat\omega}{\tau} dz^2$ & $-jg$  \\ \hline
  \end{tabular}
  \caption{Similarities of corresponding surfaces $f$ in $\Nil(\tau)$ and $f_L$ in $\L_{-++}$}
  \label{tb: similarities}
\end{table}
\begin{remark}
In Table \ref{tb: similarities}, the normal Gauss map of a surface in $\Nil(\tau)$ is $g$, while the stereographic projection of the Gauss map of the corresponding surface in $\L$ is $-jg$. This is a consequence of our choice to work with surfaces in $\L_{-++}$, rather than $\L_{+-+}$. If one instead takes as the corresponding surface in $\L_{+-+}$ the surface obtained by interchanging the first and second components in \eqref{eq: TCMC representation}, then the composition of its Gauss map with stereographic projection coincides with $g$.
\end{remark}

\begin{remark}
In this section, we introduced a correspondence between a timelike minimal surface $f$ in $\Nil(\tau)$ and a timelike CMC $\tau$ surface $f_L$ in $\mathbb{L}^3$ so that they share the same Gauss map in the sense of Table \ref{tb: similarities}. It is also known that these surfaces are further related, via parallel surfaces, to timelike surfaces in $\mathbb{L}^3$ with constant Gaussian curvature $4\tau^2$. See, for example, \cite{Kokubu}.
\end{remark}
%%%%%%%%%%%%%%%%%%%%%%%%%%%%%%%%%%
%%%%%%%%%%%%%%%%%%%%%%%%%%%%%%%%%%
%%%%%%%%%%%%%%%%%%%%%%%%%%%%%%%%%%
\section{Minimal surfaces in $\Nil(\tau)$ with singularities}
In this section, we focus on singularities on timelike minimal surfaces in $\Nil(\tau)$.
In the following, let us introduce a notion of timelike minimal surfaces in $\Nil(\tau)$ with singularities.

Throughout this paper, unless otherwise stated, we assume that the harmonic map $g$ satisfies $|g|^2 \neq -1$ and \eqref{eq: harmonic1}, as in the previous section.
A point $z\in D$ is called a {\it singular point} of a timelike surface $f$ if $f$ is not immersed at $z$.
Singular points of $f$ can be characterized by using the normal Gauss map $g$ of $f$ as follows.

\begin{proposition}
\label{prop:singpt}
Let $f$ be a timelike minimal surface in $\Nil(\tau)$ as in \eqref{eq: representation formula}. The following statements hold.
\begin{itemize}
\item[$(1)$] A point $z$ satisfying $|g(z)|^2\neq \pm \infty$ is a singular point of $f$ if and only if $|g(z)|^2=1$ or $|\hat{\omega}(z)|^2=0$. In particular, such a singular point $z$ satisfies $df_z=\bf{0}$ if and only if $\hat{\omega}(z)=0$.
\item[$(2)$] A point $z$ satisfying $|g(z)|^2= \pm \infty$ is a singular point of $f$ if and only if $|g^2\hat{\omega}(z)|^2=0$. In particular, such a singular point $z$ satisfies $df_z=\bf{0}$ if and only if $\hat{\omega}(z)=g\hat{\omega}(z)=g^2\hat{\omega}(z)=0$.
\end{itemize}
\end{proposition}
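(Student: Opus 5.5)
The plan is to avoid the induced metric, since a degenerate metric need not mean a rank drop, and instead compute the rank of $df$ directly. I will do this in the null (idempotent) decomposition of $\C'$. Put $e_\pm=(1\pm j)/2$, so that $e_+e_-=0$, $e_\pm^2=e_\pm$ and $j=e_+-e_-$. Every $w\in\C'$ is then $w=w_+e_++w_-e_-$ with $w_\pm\in\R$, and
\[
|w|^2=w_+w_- .
\]
In the null coordinates $u=x+y$, $v=x-y$ one has $\partial_z=e_+\partial_u+e_-\partial_v$. Hence, for a real vector-valued $f$, the $e_+$-part of $f_z$ is $f_u$ and the $e_-$-part is $f_v$. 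So $f$ is immersed at a point if and only if the two real vectors given by the $e_\pm$-parts of $f_z$ are linearly independent.

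For part (1), I use Proposition \ref{prop: derivative}, or equivalently \eqref{eq: df}, which gives
\[
f_z=\tfrac1\tau\hat\omega\,\bigl(g^2+1,\ j(g^2-1),\ 2g\bigr)
\]
in the frame $E_1,E_2,E_3$. Taking parts, and using $je_\pm=\pm e_\pm$, gives
\[
f_u=\tfrac1\tau\hat\omega_+\,V(g_+),\qquad f_v=\tfrac1\tau\hat\omega_-\,W(g_-),
\]
where
\[
V(t)=(t^2+1,\ t^2-1,\ 2t),\qquad W(s)=(s^2+1,\ 1-s^2,\ 2s).
\]
Both are nonzero null vectors for $g_+$ at every point, since their first component is at least $1$. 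The steps are:
\begin{itemize}
\item[(a)] If $\hat\omega_+\hat\omega_-=0$, that is $|\hat\omega|^2=0$, then one of $f_u,f_v$ vanishes and the point is singular.
\item[(b)] Otherwise $f$ is singular exactly when $V(g_+)\parallel W(g_-)$. The identity $W(1/t)=t^{-2}V(t)$ shows this happens when $g_+g_-=1$. For the converse, $t\mapsto[V(t)]$ and $s\mapsto[W(s)]$ both parametrize the projectivized null cone minus the single point $[1,1,0]$, and they are injective. So parallelism forces $g_-=1/g_+$. The case $g_+=0$ is excluded, because then $V=(1,-1,0)$ would require $g_-=\infty$, which is not allowed here. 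Hence the condition is exactly $|g|^2=g_+g_-=1$.
\item[(c)] Since $V$ and $W$ never vanish, $df=0$ if and only if $\hat\omega_+=\hat\omega_-=0$, that is, $\hat\omega=0$.
\end{itemize}

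For part (2), I interpret $|g|^2=\pm\infty$ as: one of $g_\pm$ is infinite, the other is finite and nonzero, and $g^2\hat\omega$, $g\hat\omega$, $\hat\omega$ extend smoothly. Suppose, say, $g_+=\infty$. Setting $h=1/g_+$, I rewrite
\[
\hat\omega_+V(g_+)=(g^2\hat\omega)_+\,(1+h^2,\ 1-h^2,\ 2h),
\]
which at $h=0$ equals $(g^2\hat\omega)_+(1,1,0)$. The other part $W(g_-)$, with $g_-\neq0$ finite, is never parallel to $(1,1,0)$, because $W(s)\parallel(1,1,0)$ only at $s=0$. Hence the point is singular if and only if $(g^2\hat\omega)_+=0$ or $\hat\omega_-=0$. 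Since $g_-$ is finite and nonzero, $\hat\omega_-=0$ is equivalent to $(g^2\hat\omega)_-=0$. Together these give exactly $|g^2\hat\omega|^2=0$. The case $g_-=\infty$ is symmetric. For $df=0$, I read off the components of $f_z$ directly: they are $\hat\omega+g^2\hat\omega$, $j(g^2\hat\omega-\hat\omega)$ and $2g\hat\omega$, so $df=0$ if and only if $\hat\omega=g\hat\omega=g^2\hat\omega=0$.

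The main obstacle is making (2) precise: specifying what "$|g|^2=\pm\infty$" means at poles of $g_\pm$, and checking that the rewritten extension is the correct one. I would handle it as above, through the chart $h=1/g_\pm$ in whichever null component blows up.
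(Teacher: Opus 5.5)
Your argument is correct, but for the singular-set part it takes a different route from the paper. The paper reads the singular set directly off the conformal factor $-4(1-|g|^2)^2|\hat\omega|^2/\tau^2$ of the induced metric. For $|g|^2=\pm\infty$ it passes formally to $(|g|^{-2}-1)^2|g^2\hat\omega|^2$. The $df=\mathbf{0}$ statements are handled by the same componentwise argument you give.

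Your worry that a degenerate metric need not mean a rank drop does not actually apply here. Conformality $g_+(f_z,f_z)=0$ makes both $f_u$ and $f_v$ null. Two mutually orthogonal null vectors in a Lorentzian $3$-space are linearly dependent, so a vanishing conformal factor does force $\operatorname{rank} df\le 1$. This is the unstated step behind the paper's one-line proof. Your computation makes the link explicit: $g_+(V(t),W(s))=-2(ts-1)^2$, so $2g_+(f_u,f_v)$ is exactly that conformal factor.

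What your route buys is a self-contained rank computation, the explicit null directions of $f_u$ and $f_v$, and a genuine chart-at-infinity treatment of (2) in place of a formal limit. The price is more case analysis.

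There are two small corrections.
\begin{enumerate}
\item The map $s\mapsto[W(s)]$ misses $[1,-1,0]$, not $[1,1,0]$; note that $W(0)=(1,1,0)$. This is in fact what your next sentence about $g_+=0$ uses. The converse in (b) only needs injectivity of $s\mapsto[W(s)]$ together with $W(1/t)=t^{-2}V(t)$.
\item Your reading of $|g|^2=\pm\infty$ leaves out the case $g_+=g_-=\infty$, i.e.\ the point $-e_3$ of $\mathbb{S}^2_1$, where $|g|^2=+\infty$. There the same charts give $f_u=\frac1\tau(g^2\hat\omega)_+(1,1,0)$ and $f_v=\frac1\tau(g^2\hat\omega)_-(1,-1,0)$. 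These point in independent directions, so the criterion $|g^2\hat\omega|^2=0$ holds in that case as well.
\end{enumerate}
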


\begin{proof}
First we recall that the induced metric of $f$ is written as 
\begin{equation}\label{eq:fff}
f^*g_+=-\frac{4(1-|g|^2)^2|\omega|^2}{\tau^2}=-\frac{4(1-|g|^2)^2|\hat{\omega}|^2}{\tau^2}dzd\bar{z}. 
\end{equation}
Since the derivative of $f$ is written as \eqref{prop: derivative},
$df_z=\bf{0}$ if and only if
\[
(g^2+1)\hat{\omega}=0,\quad (g^2-1)\hat{\omega}=0\quad \text{and}\quad g\hat{\omega}=0\quad \text{at $z$},
\]
which are equivalent to the conditions 
\begin{equation}\label{eq:df0}
\hat{\omega}=0,\quad g\hat{\omega}=0\quad \text{and}\quad g^2\hat{\omega}=0\quad \text{at $z$}.
\end{equation}

Let us consider the case $|g(z)|^2\neq \pm \infty$. By \eqref{eq:fff}, the condition of singular points is given as $|g(z)|^2=1$ or $|\hat{\omega}(z)|^2=0$. Moreover, \eqref{eq:df0} implies that $df_z=\bf{0}$ if and only if $\hat{\omega}(z)=0$. Hence, the proof of the assertion (1) is completed.

Next we consider the case $|g(z)|^2= \pm \infty$. In this case, by \eqref{eq:fff}, the condition of singular points is reduced to $|g^2\hat{\omega}(z)|^2=0$. In fact, if we assume $|g^2\hat{\omega}(z)|^2=0$, then $|\hat{\omega}(z)|^2=|g\hat{\omega}(z)|^2=0$ holds by the relation $0=|g^2\hat{\omega}(z)|^2=|g(z)|^2|g\hat{\omega}(z)|^2=|g^2(z)|^2|\hat{\omega}(z)|^2$.
\end{proof}

In light of Proposition \ref{prop:singpt}, we shall introduce the following class of Lorentzian harmonic maps.

\begin{definition}\label{def:extendedmin}
A Lorentzian harmonic map $g$ is said to be {\it regular} if it satisfies
\begin{itemize}
\item $\hat{\omega}$ never vanishes at each point $z$ satisfying $|g(z)|^2\neq \pm\infty$, and 
\item  $|g^2\hat{\omega}|^2$ does not vanish at each point $z$ satisfying $|g(z)|^2=\pm \infty$. 
\end{itemize}
\end{definition}

By Proposition \ref{prop:singpt} and Definition \ref{def:extendedmin}, the set of singular points of a timelike minimal surface in $\Nil(\tau)$ with a regular Lorentzian harmonic map, called the {\it singular set}, can be divided into the following types.

\begin{corollary}[cf.~\cite{Akamine}]\label{cor:devision}
The singular set $\Sigma$ of a timelike minimal surface $f$ in $\Nil(\tau)$ with a regular Lorentzian harmonic map $g$ as in \eqref{eq: representation formula} can be written as follows.
\begin{equation}\label{eq:division}
\Sigma = \Sigma^g \cup \Sigma^\omega,\quad 
\Sigma^g:=\{z\in D\mid |g(z)|^2=1 \},\quad \Sigma^\omega:=\{z\in D\mid |\hat{\omega}(z)|^2=0 \}.
\end{equation}
\end{corollary}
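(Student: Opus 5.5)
The plan is to read the corollary off Proposition \ref{prop:singpt} case by case, using regularity of $g$ (Definition \ref{def:extendedmin}) to remove the degenerate alternatives. I will show the two inclusions $\Sigma \subset \Sigma^g\cup\Sigma^\omega$ and $\Sigma^g\cup\Sigma^\omega\subset\Sigma$, and split each into the cases $|g(z)|^2\neq\pm\infty$ and $|g(z)|^2=\pm\infty$ exactly as in the proposition.

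\emph{Points with $|g(z)|^2\neq\pm\infty$.} Here Proposition \ref{prop:singpt}(1) says directly that $z\in\Sigma$ if and only if $|g(z)|^2=1$ or $|\hat\omega(z)|^2=0$, that is, $z\in\Sigma^g\cup\Sigma^\omega$. Regularity is not needed for this equivalence. Its only role is to ensure $\hat\omega(z)\neq 0$, so that no point of $\Sigma^\omega$ is a branch point with $df_z=\mathbf 0$. This is consistent with the decomposition but not required for it.

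\emph{Points with $|g(z)|^2=\pm\infty$.} By regularity, $|g^2\hat\omega(z)|^2\neq0$. Proposition \ref{prop:singpt}(2) then says that $z$ is not a singular point. I will check that such $z$ also lies in neither $\Sigma^g$ nor $\Sigma^\omega$. It is clearly not in $\Sigma^g$. For $\Sigma^\omega$, I interpret $|\hat\omega|^2$ at such a point through the continuous extension of the induced metric \eqref{eq:fff}: there the factor $(1-|g|^2)^2|\hat\omega|^2$ behaves like $|g^2\hat\omega|^2$, which is nonzero. So $z\notin\Sigma^\omega$ in the relevant sense, and both sides of the claimed identity miss $z$.

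The main obstacle is this last point. One has to make precise what $|g|^2=\pm\infty$ and $|\hat\omega|^2=0$ mean there, for instance by working in the chart $1/g$ (or its paracomplex analogue) so that the conformal factor $(1-|g|^2)^2|\hat\omega|^2$ is written as $|g^2\hat\omega|^2\,(1-|g|^{-2})^2$. I would make this chart explicit, following \cite{Akamine}, and then combine the two cases to get \eqref{eq:division}.
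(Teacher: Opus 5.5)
Your proposal is correct and follows the paper, which obtains the corollary directly from Proposition \ref{prop:singpt} together with the regularity condition of Definition \ref{def:extendedmin}, with no further argument. Your extra care at points with $|g|^2=\pm\infty$ is justified, and the convention should be stated as a definition rather than derived. Since $|\hat\omega|^2=|g^2\hat\omega|^2/|g|^4$, the literal limit of $|\hat\omega|^2$ at such points is $0$, so $\Sigma^\omega$ there must be read through $|g^2\hat\omega|^2$ (equivalently, restricted to $|g|^2\neq\pm\infty$), a convention the paper leaves implicit.
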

We would like to note that, by definition, $\Sigma^g \cap \Sigma^\omega \neq \emptyset$ in general. 

\begin{remark}\label{remark:duality}
In this paper, we assume that $|g|^2 \neq -1$, that is, the corresponding surface $f$ has no vertical points. 
However, one wishes to admit such points on the surface $f$ and their corresponding surfaces $f_L$,  we need to extend the notion of harmonic maps into $\mathbb{S}^2_1$ appropriately in the sense of~\cite{Umeda}, as follows.

We call a map $g$ into $\mathbb{S}^2_1$ an {\it extended Lorentzian harmonic map} if it satisfies
\begin{itemize}
\item $\omega=\hat{\omega} dz$ can be extended to a $\mathbb{C}'$-valued 1-form of class $C^1$ across $\{z\in D\mid |g(z)|^2=-1\}$, where 
\begin{equation}\label{eq:A}
\hat{\omega}:=\frac{- j\bar{g}_z}{(1+|g|^2)^2}, \text{ and}
\end{equation}
\item $g$ satisfies the $\mathbb{S}^2_1$-valued harmonic map equation 
\[
g_{z\bar{z}}-2(1+|g|^2)\bar{g}g_z\bar{\hat{\omega}}=0,
\]
%\[
%g_{z\bar{z}}-2j\tau (1+|g|^2)\bar{g}g_z\bar{\hat{\omega}}=0.
%g_{z\bar{z}} - \frac{2\bar{g}}{1+|g|^2} g_z g_{\bar{z}} = 0.
%\]
\end{itemize}
and the conditions in Definition \ref{def:extendedmin}. Then we obtain the timelike counterpart of extended spacelike CMC surfaces in \cite{Umeda}.
In this way, both singularities of $f$ in $\Nil(\tau)$ and of $f_L$ in $\L_{-++}$ can be treated in a unified manner.
The singular set $\Sigma_L$ of $f_L$ associated with an extended Lorenzian harmonic map $g$ is as follow (see also \eqref{eq:division}).
\[
\Sigma_L={\Sigma_L^g} \cup \Sigma^\omega,\quad {\Sigma_L^g}:=\{z\in D\mid |g(z)|^2=-1 \}.
\]
\end{remark}
%%%%%%%%%%%%%%%%%%%%%%%%%%%%%%%%%%%
%%%%%%%%%%%%%%%%%%%%%%%%%%%%%%%%%%%
\section{Types of singularities}
In this section, we give some criteria of diffeomorphism types of singular points. First, we remark notations of {\it frontal} and {\it front}.
Let $D$ be a domain in $\mathbb{R}^2$. 
A smooth map $f\colon D \longrightarrow N^3$ from $D$ into a Riemannian $3$-manifold $(N^3,g_N)$ is called a {\it frontal} if there exists a unit vector field $n$ along $f$ such that $n$ is perpendicular to $df(TD)$ with respect to $g_N$. We call $n$ the {\it unit normal vector field} of $f$. Moreover, if the map $L=(f,n)$ is an immersion, $f$ is called a {\it front}. 

A point $p\in D$ where $f$ is not an immersion is called a {\it singular point} of the frontal $f$, and we call the set of singular points of $f$ the {\it singular set}.

Let $\Omega$ be the volume element of $(N^3,g_N)$. The function $\lambda=\Omega(f_x, f_y, n)$ on $(D;x,y)$ is called the {\it signed area density function} of the frontal $f$. A singular point $p$ is called {\it non-degenerate} (resp. {\it degenerate}) if $d\lambda_p \neq0$ (resp.~$d\lambda_p=0$). 

It is easy to see that the singular set coincides with the zero set of $\lambda$, and then the condition $d\lambda \neq 0$ implies that around a non-degenerate singular point, the singular set forms a regular curve $\gamma$ by the implicit function theorem. The curve $\gamma$ is called the {\it singular curve} of the frontal $f$, with the tangent direction at each point along the curve referred to as the {\it singular direction} of $f$.
Since ${\rm rank} (df_z)=1$ at a non-degenerate singular point $z$, the kernel of $df_z$ defines a smooth vector field $\eta$ along the singular curve and is called {\it null direction} of $df_z$.

Two smooth map-germs $f_k\colon (D_k,p_k)\longrightarrow (N^3,f_k(p_k))$ ($k=1,2$) are {\it $\mathcal{A}$-equivalent} if there exist diffeomorphism germs $\Phi\colon (\mathbb{R}^2,p_1) \longrightarrow (\mathbb{R}^2,p_2)$ and $\Psi\colon (N^3,f_1(p_1)) \longrightarrow (N^3,f_2(p_2))$ such that $f_2 = \Psi \circ f_1\circ \Phi^{-1}$. As typical singular points of frontals, a {\it cuspidal edge}, a {\it swallowtail} and a {\it cuspidal cross cap} are given as singularities which are $\mathcal{A}$-equivalent to 
\[
f_{\rm CE}(x,y) = \begin{pmatrix} x\\y^2\\y^3 \end{pmatrix},
\quad
f_{\rm SW}(x,y) = \begin{pmatrix} 3x^4+x^2y\\4x^3+2xy\\y \end{pmatrix},
\quad
f_{\rm CCR}(x,y) = \begin{pmatrix} x\\y^2\\xy^3 \end{pmatrix}
\] 
at origin, respectively. Cuspidal edges and swallowtails are singular points of fronts, and cuspidal cross caps are singular points of frontals but not fronts.

In this paper, we consider the situation of $N^3=\Nil(\tau)$.  
 Since the Heisenberg group $\Nil(\tau)$ is regarded as the space $\mathbb{R}^3$ as in Section 2, and one can consider the left-invariant Riemannian metric $g_R$ in \eqref{eq: Riem met}, we may regard $(N^3, g_N)$ as $(\Nil(\tau), g_R) = (\mathbb{R}^3, g_R)$. 
Since the volume element $\Omega$ of the Riemannian Heisenberg group $(\Nil(\tau), g_R)$ is $\Omega=dx^1\wedge dx^2\wedge dx^3$, the signed area density function $\lambda$ is written as 
\[
\lambda = \Omega(f_x, f_y, n) = \det{(f_x, f_y, n)}.
\]
The following Facts state well-known criterion for distinguishing types of singularities.

\begin{fact}[\cite{KRSUY}]\label{Fact: KRSUY}
Let $f\colon D \longrightarrow \mathbb{R}^3$ be a front and $p\in D$ a non-degenerate singular point of $f$. Take a singular curve $\gamma=\gamma(t)$ with $\gamma(0)=p$ and a vector field of null directions $\eta(t)$. Then
\begin{itemize}
\item[(i)] $p$ is a cuspidal edge if and only if $\det\left(\gamma'(0), \eta(0)\right)\neq 0$.
\item[(ii)] $p$ is a swallowtail if and only if $\det\left(\gamma'(0), \eta(0)\right)=0$ and $ \left. \frac{d}{dt}\det\left(\gamma'(t), \eta(t)\right)\right|_{t=0}\neq0$.
\end{itemize}
\end{fact}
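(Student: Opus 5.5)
Since this Fact is quoted from \cite{KRSUY}, we only sketch how we would reprove it. The plan is to work in Legendrian singularity theory and reduce both criteria to versality of a generating family. Because $f$ is a front, $L=(f,n)\colon D\to T_1^*\mathbb{R}^3$ is a Legendrian immersion. By Arnold--Zakalyukin theory, the $\mathcal{A}$-type of the front germ is determined by the $\mathcal{K}$-type of a generating family $F(t,x)$, $t\in\mathbb{R}$, $x\in\mathbb{R}^3$, with $f(D)=\{x\mid \exists t:\ F=F_t=0\}$. The cuspidal edge and the swallowtail are exactly the discriminants of $\mathcal{K}$-versal unfoldings of $A_2$ and $A_3$, namely $t^3+x_1t+x_2$ and $t^4+x_1t^2+x_2t+x_3$. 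So it suffices to show three things:
\begin{itemize}
\item[(a)] non-degeneracy $d\lambda_p\neq0$ is equivalent to versality of the generating family at a Morin-type point;
\item[(b)] $\det(\gamma',\eta)\neq0$ is equivalent to $F(\cdot,f(p))$ having an $A_2$ singularity;
\item[(c)] under the transversality condition of (ii), it is equivalent to $A_3$.
\end{itemize}

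\emph{Step 1: normalize coordinates.} Near a non-degenerate singular point, $\mathrm{rank}\, df_p=1$. We choose local coordinates $(u,v)$ in which the singular curve is $\gamma(t)=(t,0)$, and we extend $\eta$ to a vector field $\tilde\eta$ on $D$. Then $\lambda$ vanishes exactly on $v=0$ and $d\lambda\neq0$ there. We also use two derivative facts: $d\lambda(\tilde\eta)\neq 0$ at a point is equivalent to $\eta$ being transversal to $\gamma'$, and along $\gamma$ the function $\varphi(t)=\det(\gamma'(t),\eta(t))$ is comparable to $\lambda_{\tilde\eta}$.

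\emph{Step 2: cuspidal edge.} If $\eta$ is transversal to $\gamma'$, we take $\eta=\partial_v$. Then $f_v=v\,\psi$ for a smooth vector field $\psi$, and $\lambda_v\neq0$ gives that $f_u,\psi,n$ are independent. The front condition, namely that $L$ is an immersion so $n_v\neq0$ on $\gamma$, together with $\langle f_{vv},n\rangle=0$, yields that $\psi_v$ has a nonzero component along $n$. Expanding $f(u,v)$ in $v$ after a target diffeomorphism straightening $f(u,0)$ and $\psi(u,0)$ then gives $(u,\,v^2a,\,v^3b)$ with $a,b\neq0$. A standard division or Malgrange preparation argument reduces this to $(u,v^2,v^3)$. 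Equivalently, the generating family has an $A_2$ point and is automatically versal because of $d\lambda\neq0$.

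\emph{Step 3: swallowtail, which we expect to be the main obstacle.} Now $\varphi(0)=0$ and $\varphi'(0)\neq0$, so $\eta$ is tangent to $\gamma$ only at $p$, and tangency is of first order. We would write the generating family concretely by parametrizing the contact elements through nearby points of space by the parameter $t$ along the fiber of the null foliation. Then we identify the order of contact of $\eta$ with $\gamma$ with the order of vanishing of $\partial_t^kF$. The goal is to show that $\varphi(0)=0$ and $\varphi'(0)\neq0$ translate exactly into $F_t=F_{tt}=F_{ttt}=0$ and $F_{tttt}\neq0$ at $p$, and that $d\lambda_p\neq0$ plus the front condition give the rank condition on $\partial_x(F,F_t,F_{tt})$ needed for $\mathcal{K}$-versality. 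The delicate part is this bookkeeping: checking that the criterion is independent of the choice of $\eta$ and of the parameter $t$, and that the immersion property of $L$ supplies exactly the missing versality direction.

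Once (a)--(c) are established, the classification theorem of Legendrian singularities finishes the argument. The converse directions of (i) and (ii) follow because $\mathcal{A}$-equivalence preserves non-degeneracy, the singular curve, the null direction and the order of their tangency, and these data are visibly as stated in the normal forms $f_{\rm CE}$ and $f_{\rm SW}$.
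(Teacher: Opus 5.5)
The paper gives no proof of this Fact; it only cites \cite{KRSUY}. Your route through $\mathcal{K}$-versal $A_2$/$A_3$ generating families is the standard one for this criterion. Step~2 is sound: it is a self-contained normal-form proof of (i) that avoids generating families. The one unstated point, that $n_v\perp f_u$ on $\gamma$, follows from $\langle f_u,n_v\rangle=\langle f_v,n_u\rangle=0$ there. Your invariance argument for the converse directions is also fine.

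\textbf{The gap is Step~3, which carries all of (ii).} It states what must be shown rather than showing it, and the construction it hints at is not well defined. The null direction exists only on the singular curve. Parametrizing along an arbitrary foliation that extends $\eta$ does not give a generating family: the leaf must be determined by the target point $x$. That is, the leaves must be level sets of $\ell\circ f$ for some function $\ell$ on $\mathbb{R}^3$. Otherwise $\{F=F_t=0\}$ is not the graph of $f$, and you learn nothing about the map germ $f$.

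\textbf{How to fill it.} The missing idea is a chart in which the null direction is a coordinate direction along the whole singular curve.
\begin{enumerate}
\item Rotate so that $d(x_1\circ f)_p\neq0$, and take coordinates $(u,t)$ on $D$ with $u=x_1\circ f$ and $p=(u_0,t_0)$. Here $f_t$ has zero first component and $f_u$ has first component $1$. Hence every nearby singular point has $f_t=0$, so $\eta=\partial_t$ along the singular curve.
\item Put $F(x,t)=\langle x-f(x_1,t),n(x_1,t)\rangle$, so that $F_t=\langle x-f,n_t\rangle$. The front condition gives $n_t(p)\neq0$. Also $n_t(p)\perp n(p),f_u(p)$, since $\langle f_u,n_t\rangle=\langle f_t,n_u\rangle$. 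So $n$ and $n_t$ are independent modulo $e_1$.
\item Consequently $\{F=F_t=0\}=\{x=f(x_1,t)\}=:\Sigma_F$, and $d_xF=n$ and $d_xF_t$ are independent. Thus $F$ is a Morse family generating $L$, and its discriminant map is $f$ itself.
\item With $w=n\times f_u$ one has $f_t=\lambda|f_u|^{-2}(w-w^1f_u)$. Hence $F_{tt}|_{\Sigma_F}=-\langle f_t,n_t\rangle=c\lambda$ with $c(p)\neq0$.
\item Differentiating in $t$ where $f_t=0$ gives $F_{ttt}=c\,\lambda_t=c\,d\lambda(\eta)$ along the singular curve. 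By your Step~1 this is a nonvanishing multiple of $\varphi$.
\item When $\varphi(0)=0$, the vector $\gamma'(0)\in\ker df_p$ lifts to the pure $t$-direction in $\Sigma_F$. Therefore $\varphi(0)=0\neq\varphi'(0)$ if and only if $F_{ttt}=0\neq F_{tttt}$ at $(f(p),t_0)$, i.e.\ an $A_3$ point.
\item At an $A_{\geq3}$ point the $dt$-components $F_t,F_{tt},F_{ttt}$ of $dF,dF_t,dF_{tt}$ vanish. So $d\lambda_p\neq0$, equivalently $d(F_{tt}|_{\Sigma_F})\neq0$, is exactly $\operatorname{rank}\partial_x(F,F_t,F_{tt})=3$. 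This is the $\mathcal{K}$-versality condition for $A_3$. For $A_2$, the corresponding rank-$2$ condition is just the Morse-family property.
\end{enumerate}
These identifications are the missing content. With them your outline becomes a proof; without them (ii) is only asserted.
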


\begin{fact}[\cite{FSUY}]\label{Fact: FSUY}
Let $f\colon D \longrightarrow \mathbb{R}^3$ be a frontal and $p\in D$ a non-degenerate singular point of $f$. Take a singular curve $\gamma=\gamma(t)$ with $\gamma(0)=p$ and a vector field of null directions $\eta(t)$. Then $p$ is a cuspidal cross cap if and only if \begin{center}$\det\left(\gamma'(0), \eta(0)\right)\neq0$, $\det\left(df(\gamma'(0)), n(0), dn(\eta(0))\right)=0$ and $\left.\frac{d}{dt}\det\left(df(\gamma'(t)), n(t), dn(\eta(t))\right)\right|_{t=0}\neq0$.\end{center}
\end{fact}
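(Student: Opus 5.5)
The plan is to reduce $f$ to a normal form by coordinate changes on the source and target, and then read off the three conditions of Fact \ref{Fact: FSUY} in that normal form. All the quantities involved are invariant under such changes up to non-vanishing factors. These quantities are the non-degeneracy of $p$, the transversality $\det(\gamma',\eta)\neq0$, and the function $\psi(t):=\det\left(df(\gamma'(t)),n(t),dn(\eta(t))\right)$ together with its first-order behaviour at a zero. So the argument can be carried out entirely in adapted coordinates.

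First, since $\det(\gamma'(0),\eta(0))\neq0$, I choose local coordinates $(u,v)$ centred at $p$ such that the singular curve is the $u$-axis and $\eta=\partial_v$ along it. Then $f_v(u,0)=0$, so by the division lemma $f_v=v\,h(u,v)$ for a smooth $h$. This gives $\lambda=v\det(f_u,h,n)$, and non-degeneracy $d\lambda_p\neq0$ means that $f_u$, $h$, $n$ are linearly independent at $p$. Using these as a moving frame, I apply a diffeomorphism of the target (straightening $f(u,0)$ and aligning $h$) and a further change of $v$ (possible because $f$ restricted to each $u$-line has a fold-type $v^2$ term). This brings $f$ into the form
\[
f(u,v)=\bigl(u,\ v^2/2,\ v^3 a(u,v)\bigr)
\]
for a smooth function $a$. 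The conditions $a(0,0)\neq0$ and $a(0,0)=0$ then distinguish the cuspidal-edge-like and non-front situations.

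Next, in this normal form I compute $n$ up to a non-zero multiple, namely $n\propto(\ast,\,-3va-v^2a_v,\,1)$ after normalizing. From this I get $\psi(u)=c(u)\,a(u,0)$ with $c(0)\neq0$. I also check that the choice of unit normal, of the parametrization $\gamma$, and of the scaling of $\eta$ only multiplies $\psi$ by non-vanishing functions. Hence the conditions $\psi(0)=0$ and $\psi'(0)\neq0$ are equivalent to $a(0,0)=0$ and $a_u(0,0)\neq0$, and the front condition is exactly $a(0,0)\neq0$.

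Finally, assuming $a(0,0)=0$ and $a_u(0,0)\neq0$, I would construct explicit diffeomorphisms taking $(u,v^2/2,v^3a)$ to $f_{\rm CCR}=(x,y^2,xy^3)$. The idea is to write $v^3a(u,v)=v^3\bigl(\alpha(u,v^2)+v\beta(u,v^2)\bigr)$ with $\alpha(0,0)=0$ and $\alpha_u(0,0)\neq0$. The odd part in $v$ is absorbed by a target change using the coordinate $y=v^2/2$. Then $x':=\alpha(u,v^2)$ is used as a new first coordinate, which is legitimate since $\alpha_u\neq0$. For the converse, the computation of $\psi$ on $f_{\rm CCR}$ itself and the invariance established above give the conditions. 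The main obstacle is this last normal-form step: making sure the even/odd splitting in $v$ is realized by smooth target diffeomorphisms (via a Whitney-type lemma on even functions), and that the remaining $v^3\beta(u,v^2)\,v$ term can indeed be removed. If a direct construction fails, I would fall back on a finite-determinacy argument showing that $f_{\rm CCR}$ is determined by its low-order jet within the class of frontals.
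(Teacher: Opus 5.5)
The paper gives no proof of this Fact; it is quoted from \cite{FSUY}. Your outline follows the standard argument: adapted coordinates, the normal form $f=(u,v^2/2,v^3a)$, the computation $\psi(u)=\pm 3a(u,0)$ in that form, and Whitney's even-function splitting. The normal form and the final step are correct.

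There is one genuine gap. You claim that $\psi$ changes only by a non-vanishing factor under diffeomorphisms of the \emph{target}. Your list of checks (choice of $n$, parametrization of $\gamma$, scaling of $\eta$) does not cover this, and it is not a formal consequence of the transformation rules. Let $\Phi$ be a target diffeomorphism and $A=(d\Phi)^{-1}$ along $f$. Then $\Phi\circ f$ has unit normal $\tilde n=\rho A^{T}n$ with $\rho>0$, and $d(A\circ f)(\eta)=0$ on the singular curve because $df(\eta)=0$ there. Writing $X=df(\gamma')$ and $Z=dn(\eta)$, this gives
\[
\tilde\psi=\rho^{2}\det\bigl(A^{-1}X,\,A^{T}n,\,A^{T}Z\bigr)=\rho^{2}\det A\,\det\bigl(BX,\,n,\,Z\bigr),\qquad B=(AA^{T})^{-1}.
\]
For a general $Z\perp n$ this is not a multiple of $\det(X,n,Z)$. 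If $Z\parallel X$, the latter vanishes while $\det(BX,n,X)$ generally does not. So the transformation rule alone does not preserve the zero set of $\psi$, even for linear target maps, and your normalization uses non-linear ones.

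The missing idea is that $dn(\eta)\perp df(\gamma')$ along the singular curve. For a frontal, the form $(\xi,\zeta)\mapsto\langle df(\xi),dn(\zeta)\rangle$ is symmetric (differentiate $\langle f_u,n\rangle=\langle f_v,n\rangle=0$). Hence $\langle df(\gamma'),dn(\eta)\rangle=\langle df(\eta),dn(\gamma')\rangle=0$, so $dn(\eta)=\kappa\,df(\gamma')\times n$ with $\kappa$ smooth. Then $\psi=\kappa\,|df(\gamma')|^{2}$, where the factor is nonzero by transversality. The display above becomes $\tilde\psi=\rho^{2}\det A\,\langle BX,X\rangle\,\kappa$, a non-vanishing multiple of $\psi$. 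This also shows that $\psi(t)=0$ if and only if $dn(\eta)=0$, i.e.\ if and only if $f$ fails to be a front at $\gamma(t)$. With this lemma added, both directions of your argument go through.

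The step you flag as the main obstacle is routine. Whitney's theorem gives $a=\alpha(u,v^{2})+v\beta(u,v^{2})$. The target change $Z\mapsto Z-4Y^{2}\beta(X,2Y)$ removes $v^{4}\beta(u,v^{2})$. Then $(X,Y,Z)\mapsto(\alpha(X,2Y),Y,Z)$ followed by the source change $(u,v)\mapsto(\alpha(u,v^{2}),v)$ gives $(x,v^{2}/2,xv^{3})$, so no determinacy argument is needed.

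One small correction in the normalization: making the first component exactly $u$ requires changing $u$ as well (e.g.\ $\tilde u=f^{1}(u,v)$), not only $v$. This change keeps the singular curve and $\eta=\partial_v$ because $f^{1}_v(u,0)=0$.
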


In the end of this subsection, we give the following proposition.

\begin{lemma}\label{lemma:lambda}
For each timelike minimal surface $f$ in $\Nil(\tau)$ with a regular Lorentzian harmonic map $g$, the following statements hold.
\begin{itemize}
\item[$(1)$] $f$ is a frontal with the unit normal vector field 
\begin{equation}\label{eq:N_R}
N_R:=\frac{1}{\sqrt{2(g+\bar{g})^2+(1-|g|^2)^2}}\{-(g+\bar{g})E_1+j(g-\bar{g})E_2+(1+|g|^2)E_3 \},
\end{equation}
with respect to the left invariant Riemannian metric $g_R$.
\item[$(2)$] The signed area density function $\lambda$ is expressed as
\begin{equation}\label{eq:lambda}
\lambda=-\frac{2}{\tau^2} |\hat\omega|^2 (1-|g|^2)\sqrt{2(g+\bar{g})^2+(1-|g|^2)^2}.
\end{equation}
\end{itemize}
\end{lemma}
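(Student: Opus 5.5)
The plan is to work throughout in the left-invariant frame $E_1,E_2,E_3$. It is $g_R$-orthonormal, and since $\det(E_1,E_2,E_3)=1$ in the coordinates $x^1,x^2,x^3$, we have $\Omega(X,Y,Z)=\det(X,Y,Z)$ computed from components in this frame. Write $f_z=\phi^1E_1+\phi^2E_2+\phi^3E_3$ with
\[
\phi^1=(g^2+1)\hat\omega/\tau,\quad \phi^2=j(g^2-1)\hat\omega/\tau,\quad \phi^3=2g\hat\omega/\tau,
\]
as in Proposition \ref{prop: derivative}. Set $V:=-(g+\bar g)E_1+j(g-\bar g)E_2+(1+|g|^2)E_3$. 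Writing $g=a+jb$ shows that $V$ is real with components $(-2a,\,2b,\,1+a^2-b^2)$.

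\textbf{Step 1 (normalization).} I compute $g_R(V,V)=4a^2+4b^2+(1+a^2-b^2)^2$. Expanding gives $6a^2+2b^2+1+(a^2-b^2)^2$, which equals $8a^2+(1-a^2+b^2)^2=2(g+\bar g)^2+(1-|g|^2)^2$. This is strictly positive, so $N_R=V/|V|_{g_R}$ is a well-defined smooth unit field wherever $g$ is finite.

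\textbf{Step 2 (orthogonality).} Here $g_R(f_z,V)=\frac{\hat\omega}{\tau}\bigl(-(g+\bar g)(g^2+1)+j^2(g-\bar g)(g^2-1)+2g(1+g\bar g)\bigr)$, and the bracket expands to $0$. Since $V$ is real, conjugation also gives $g_R(f_{\bar z},V)=0$. Hence $N_R$ is $g_R$-orthogonal to $f_x=f_z+f_{\bar z}$ and to $f_y=j(f_z-f_{\bar z})$, so $f$ is a frontal. At points where $|g|^2=\pm\infty$, I would rewrite $N_R$ by dividing numerator and denominator by a suitable power of $g$ (or by passing to $1/g$). This shows that $N_R$ extends smoothly there, in the same way the regularity assumption in Definition \ref{def:extendedmin} is handled.

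\textbf{Step 3 ($\lambda$).} I write $\lambda=\det(f_x,f_y,N_R)=(f_x\times f_y)\cdot V/|V|$, with the Euclidean cross and dot products taken in the $E$-frame. Bilinearity gives $f_x\times f_y=-2j\,f_z\times f_{\bar z}$. I then compute $\phi\times\bar\phi$ explicitly in terms of $g$ and $\hat\omega$. By Step 2, and because $f_x\times f_y$ is Euclidean-orthogonal to $f_x,f_y$ (it is real), this vector must be a multiple of $V$. I expect the multiple to be $-\frac{2}{\tau^2}|\hat\omega|^2(1-|g|^2)$; one component, say the $E_3$ one, is enough to identify the factor. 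Then $\lambda=c\,|V|^2/|V|=c\,|V|$, which is formula \eqref{eq:lambda}. The main obstacle is the paracomplex bookkeeping in this cross product, in particular getting the sign and the factor $(1-|g|^2)$ exactly right. I would cross-check the factor against the induced metric \eqref{eq:fff}, whose vanishing locus must coincide with that of $\lambda$.
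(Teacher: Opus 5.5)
Your route is the paper's route. The paper just asserts that a direct computation in the $E$-frame gives $f_x\times_R f_y=c\,V$, and reads off both assertions from this. Your Steps 1 and 2 are correct: $g_R(V,V)=2(g+\bar g)^2+(1-|g|^2)^2>0$, $g_R(f_z,V)=0$, $V$ is real, and $f_x\times f_y=-2j\,f_z\times f_{\bar z}$. Your argument that $f_x\times f_y$ must be a multiple of $V$ works at regular points, and at singular points $f_x\times f_y=0$ anyway. Reading the factor off the $E_3$ component is legitimate because $1+|g|^2\neq 0$.

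The gap is the one computation you left as an expectation: it does not give $-\frac{2}{\tau^2}$. With $\phi^1=(g^2+1)\hat\omega/\tau$ and $\phi^2=j(g^2-1)\hat\omega/\tau$,
\[
(\phi\times\bar\phi)^3=\phi^1\bar\phi^2-\phi^2\bar\phi^1=-\frac{j}{\tau^2}|\hat\omega|^2\bigl[(g^2+1)(\bar g^2-1)+(g^2-1)(\bar g^2+1)\bigr]=-\frac{2j}{\tau^2}|\hat\omega|^2\bigl(|g|^4-1\bigr).
\]
Hence $(f_x\times f_y)^3=-2j(\phi\times\bar\phi)^3=-\frac{4}{\tau^2}|\hat\omega|^2(1-|g|^2)(1+|g|^2)$, so $c=-\frac{4}{\tau^2}|\hat\omega|^2(1-|g|^2)$. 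The $E_1$ and $E_2$ components give the same $c$.

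A pointwise check confirms this. Take a point with $g=0$, $\hat\omega=1$, $\tau=1$. Then $f_x=2\Re f_z=2E_1$ and $f_y=2\Im f_z=-2E_2$, which is consistent with the induced metric $-4\,dz\,d\bar z$. So $\lambda=\det(2E_1,-2E_2,E_3)=-4$, whereas the printed formula gives $-2$.

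Carrying out your plan therefore proves
\[
\lambda=-\frac{4}{\tau^2}|\hat\omega|^2(1-|g|^2)\sqrt{2(g+\bar g)^2+(1-|g|^2)^2}.
\]
The constant in the statement, and in the paper's displayed expression for $f_x\times_R f_y$, is off by a factor of $2$. This is harmless for every later use of the lemma, since only the zero set of $\lambda$ and the non-vanishing of $d\lambda$ matter. Still, your write-up should derive and state the corrected constant rather than expect the printed one.

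The cross-check you proposed, comparing vanishing loci with the induced metric, cannot detect a constant factor. A normalization check at a single point, like the one above, is what catches it.
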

\begin{proof}
The Riemannian vector product $\times_R$ is defined by
\[
g_R \left( \left( \sum_{i=1}^3 X^i E_i \right) \times_R \left( \sum_{j=1}^3 Y^j E_j \right), \sum_{k=1}^3 Z^k E_k \right)
=
\det \begin{pmatrix} X^1 & Y^1 & Z^1\\X^2 & Y^2 & Z^2\\ X^3 & Y^3 & Z^3 \end{pmatrix}.
\]
Then a direct computation yields
\[
f_x \times_R f_y = -\frac{2}{\tau^2} |\hat\omega|^2 (1-|g|^2) \left( -(g+\bar{g})E_1+j(g-\bar{g})E_2+(1+g\bar{g})E_3 \right),
\]
and hence the assertions follow.
\end{proof}

\subsection{Singularities in $\Sigma^\omega$}
By Corollary \ref{cor:devision}, the singular set $\Sigma$ of a timelike minimal surface $f$ in $\Nil(\tau)$ with a regular Lorentzian harmonic map $g$ is decomposed into $\Sigma=\Sigma^g\cup \Sigma^\omega$ as in \eqref{eq:division}. We first see singular points in $\Sigma^{\omega}$.
In this subsection, we discuss the most typical singular points in $\Sigma^\omega$. Based on Lemma \ref{lemma:lambda}, we first give a non-degenerate condition of singular points in $\Sigma^\omega$.

\begin{lemma}\label{lemma:nondeg}
A singular point $z\in \Sigma^\omega\setminus \Sigma^g$ of a timelike minimal surface in $\Nil(\tau)$ with a regular Lorentzian harmonic map $g$ is non-degenerate if and only if $\hat{\omega}_z\overline{\hat{\omega}}(z)\neq 0$. 
On the other hand, any singular point $z\in \Sigma^\omega\cap \Sigma^g$ is degenerate.
\end{lemma}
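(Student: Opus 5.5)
We differentiate the expression \eqref{eq:lambda} of Lemma \ref{lemma:lambda} and evaluate at a singular point in $\Sigma^\omega$. Write
\[
\lambda=-\frac{2}{\tau^2}\,|\hat\omega|^2\,\mu,\qquad \mu:=(1-|g|^2)\sqrt{2(g+\bar g)^2+(1-|g|^2)^2}.
\]
The quantity under the square root is the positive normalizing factor from \eqref{eq:N_R}. Here $|g|^2$ is finite near such points, because regularity of $g$ excludes $|g|^2=\pm\infty$ on $\Sigma^\omega$: if $|g|^2=\pm\infty$ then $|g^2\hat\omega|^2\neq0$, which would force $|\hat\omega|^2\ne0$. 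Hence $\mu$ is smooth.

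At $z\in\Sigma^\omega$ we have $|\hat\omega|^2=0$. The product rule therefore gives
\[
d\lambda_z=-\frac{2}{\tau^2}\,\mu(z)\,d(|\hat\omega|^2)_z .
\]

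\emph{The case $z\in\Sigma^\omega\cap\Sigma^g$.} Here $\mu(z)=0$ because $1-|g|^2=0$. Both factors of $\lambda$ vanish, so $d\lambda_z=0$ and the point is degenerate.

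\emph{The case $z\in\Sigma^\omega\setminus\Sigma^g$.} Here $1-|g|^2\neq0$ and the root is positive, so $\mu(z)\neq0$. Thus non-degeneracy is equivalent to $d(|\hat\omega|^2)_z\ne0$. Since $|\hat\omega|^2$ is real, $d(|\hat\omega|^2)=0$ is equivalent to $(|\hat\omega|^2)_z=0$, using the paracomplex Wirtinger derivatives $\partial_z=\frac12(\partial_x+j\partial_y)$ and $\partial_{\bar z}=\overline{\partial_z}$. We then expand
\[
(|\hat\omega|^2)_z=\hat\omega_z\overline{\hat\omega}+\hat\omega\,\overline{\hat\omega_{\bar z}} .
\]
The key step is to show that the second term vanishes on $\Sigma^\omega$. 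For this we use the Dirac-type equation \eqref{eq: Dirac equation3}, $\hat\omega_{\bar z}=-2j|\hat\omega|^2(1+|g|^2)\bar g$, which shows that $\hat\omega_{\bar z}$ is a multiple of $|\hat\omega|^2$. Indeed, Theorem \ref{thm: identify TMS} shows that \eqref{eq: Dirac equation3} holds for $\hat\omega=-j\bar g_z/(1+|g|^2)^2$ whenever $g$ satisfies \eqref{eq: harmonic1}. Since $|\hat\omega(z)|^2=0$, we get $\hat\omega_{\bar z}(z)=0$, hence $(|\hat\omega|^2)_z(z)=\hat\omega_z\overline{\hat\omega}(z)$. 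This gives the stated criterion $\hat\omega_z\overline{\hat\omega}(z)\neq0$.

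The only delicate point is the real-versus-paracomplex bookkeeping: we must check that for a real function $h$, $dh=0$ if and only if $h_z=0$. This holds because $h_z=\frac12(h_x+jh_y)$ and $\{1,j\}$ is a real basis of $\C'$. Everything else is direct.
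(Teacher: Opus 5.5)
Your proof is correct and follows the paper's argument: at a point of $\Sigma^\omega$ the product rule reduces $d\lambda$ to a nonzero multiple of $d(|\hat\omega|^2)$ off $\Sigma^g$ and to zero on $\Sigma^g$, where both factors of \eqref{eq:lambda} vanish, and the equation $\hat\omega_{\bar z}=-2j|\hat\omega|^2(1+|g|^2)\bar g$ kills the term $\hat\omega\,\overline{\hat\omega_{\bar z}}$ in $(|\hat\omega|^2)_z$. One small slip: your stated reason that $|g(z)|^2$ is finite runs the wrong way, since $|g^2\hat\omega|^2=|g|^4|\hat\omega|^2\neq0$ at a point with $|g|^2=\pm\infty$ forces $|\hat\omega|^2=0$ there (by continuity), not $|\hat\omega|^2\neq0$; the correct justification is that $z$ is assumed singular, and Proposition~\ref{prop:singpt}(2) together with regularity rules out singular points with $|g|^2=\pm\infty$.
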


\begin{proof}
First, we show the first part of the assertion. Since $z\in  \Sigma^\omega\setminus \Sigma^g$, the relations $\hat{\omega}\overline{\hat{\omega}}=0$ and $(1-g\bar{g})\neq 0$ hold at $z$. Hence, $d\lambda_z\neq \bf{0}$ if and only if $(\hat{\omega}\overline{\hat{\omega}})_z(z)\neq 0$.

Let us recall that the following statements hold.
\begin{equation}\label{eq:omega_z}
\hat{\omega}_{\bar{z}}=-2j\bar{g}(1+\bar{g})\hat{\omega}\overline{\hat{\omega}},\quad g_{\bar{z}}=j(1+g\bar{g})^2\overline{\hat{\omega}}.
\end{equation}
Then, $z\in  \Sigma^\omega$ also implies $\hat{\omega}_{\bar{z}}(z)=0$.
Therefore, $(\hat{\omega}\overline{\hat{\omega}})_z(z)\neq 0$ is equivalent to $\hat{\omega}_z\overline{\hat{\omega}}(z)\neq 0$.

The latter assertion follows immediately from \eqref{eq:lambda}. 
\end{proof}

Lemma \ref{lemma:nondeg} enables us to determine diffeomorphism types of non-degenerate singular points in $\Sigma^\omega$ satisfying the front condition.

\begin{theorem}
The cuspidal edge is only non-degenerate singular point belonging to $\Sigma^\omega$ on which the considered timelike minimal surface in $\Nil(\tau)$ with a regular Lorentzian harmonic map is a front. 
\end{theorem}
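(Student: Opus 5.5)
We will show that at any non-degenerate singular point $p\in\Sigma^\omega$ where $f$ is a front, the null direction $\eta$ is transversal to the singular curve. Fact \ref{Fact: KRSUY}(i) then says $p$ is a cuspidal edge; in particular it can never be a swallowtail. By Lemma \ref{lemma:nondeg}, non-degeneracy forces $p\notin\Sigma^g$, so $1-|g|^2\neq0$ near $p$, and also $\hat\omega_z\overline{\hat\omega}(p)\neq0$.

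\textbf{Step 1: the singular curve and the null direction.} Near $p$ the singular set is $\{|\hat\omega|^2=0\}$. Write $\hat\omega=a+jb$ and use null coordinates $u=x+y$, $v=x-y$. Paracomplex quantities then split into two real components, $\hat\omega\leftrightarrow(\hat\omega_+,\hat\omega_-)$, with $|\hat\omega|^2=\hat\omega_+\hat\omega_-$. The condition $\hat\omega\overline{\hat\omega}=0$ at $p$ says one of the two components vanishes there, say $\hat\omega_-(p)=0$ and $\hat\omega_+(p)\neq0$ (the other case is symmetric). Then $f_z$ in \eqref{eq: derivative} is a multiple of $\hat\omega$ by a vector whose components are nonvanishing combinations of $g$ (because $1-|g|^2\neq0$, the vectors $(g^2+1,j(g^2-1),2g)$ are not null). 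Hence $df$ annihilates exactly one null coordinate direction, and $\eta$ is the lightlike coordinate vector field, say $\partial_v$ or $\partial_u$ according to which component vanishes.

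\textbf{Step 2: the singular curve.} The curve is $\gamma=\{\hat\omega_-=0\}$. Its tangent is transversal to $\eta$ exactly when the derivative of $\hat\omega_-$ along $\eta$ is nonzero at $p$. Relation \eqref{eq:omega_z} gives $\hat\omega_{\bar z}=0$ on $\Sigma^\omega$, so $\hat\omega_-$ is annihilated by one of the null derivatives along the curve. I expect this vanishing null derivative to be precisely the one not equal to $\eta$, so that $\gamma$ is tangent to the other null line. Non-degeneracy $\hat\omega_z\overline{\hat\omega}\neq0$ then says that the derivative of $\hat\omega_-$ in the remaining null direction is nonzero. Which of the null derivatives is killed (by $\partial_u$ or $\partial_v$) must be matched carefully with the splitting. \textbf{This bookkeeping is the main obstacle.} The case that must be excluded is $\gamma'\parallel\eta$, i.e. $\gamma$ tangent to the null direction of $df$. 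If the matching turns out the other way, I would instead use the front condition: on the set where $\eta(\hat\omega_-)=0$, a computation of $dN_R(\eta)$ from \eqref{eq:N_R} using $g_{\bar z}=j(1+|g|^2)^2\overline{\hat\omega}$ shows that $dN_R(\eta)=0$ at $p$. Then $L=(f,N_R)$ fails to be immersive, contradicting the front hypothesis.

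\textbf{Step 3: conclusion.} Either branch gives $\det(\gamma'(0),\eta(0))\neq0$ at every such $p$, and Fact \ref{Fact: KRSUY}(i) yields a cuspidal edge. This proves the claim.
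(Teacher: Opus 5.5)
Your strategy is the same as the paper's. You identify $\eta$ and the singular curve, prove $\det(\gamma',\eta)\neq0$ from $\hat\omega_{\bar z}=0$ on $\Sigma^\omega$ together with non-degeneracy, and use the front hypothesis only to invoke Fact~\ref{Fact: KRSUY}(i). The paper works in paracomplex form: $\gamma'=j\overline{\hat\omega_z}\hat\omega$, $\eta=\overline{\hat\omega}$, $\det(\gamma',\eta)=-\Re(\hat\omega^2\overline{\hat\omega_z})$, and a paracomplex number with zero modulus and zero real part vanishes. Your idempotent splitting is the same computation in null coordinates.

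\textbf{The gap.} As written, your proof stops exactly at the decisive step. You only \emph{expect} that the null derivative killed by $\hat\omega_{\bar z}=0$ is the one different from $\eta$, and your fallback does not work. The relation $g_{\bar z}=j(1+|g|^2)^2\overline{\hat\omega}$ only removes the $g_{\bar z}$-term of $dg(\eta)$, leaving $dg(\eta)=g_z\overline{\hat\omega}$ at $p$. Nothing forces this, and hence $dN_R(\eta)$, to vanish, so you get no contradiction with the front condition that way.

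\textbf{Closing it.} The bookkeeping does go your way. With $e_\pm=(1\pm j)/2$ and $z=ue_++ve_-$, one has $\partial_z=e_+\partial_u+e_-\partial_v$ and $\partial_{\bar z}=e_-\partial_u+e_+\partial_v$. So for $\hat\omega=\hat\omega_+e_++\hat\omega_-e_-$,
\[
\hat\omega_{\bar z}=e_+\,\partial_v\hat\omega_++e_-\,\partial_u\hat\omega_-,\qquad
\hat\omega_z\overline{\hat\omega}=e_+\,\hat\omega_-\partial_u\hat\omega_++e_-\,\hat\omega_+\partial_v\hat\omega_-.
\]
At $p$, where $\hat\omega_-=0\neq\hat\omega_+$, the condition $\hat\omega_{\bar z}=0$ gives $\partial_u\hat\omega_-=0$, and non-degeneracy $\hat\omega_z\overline{\hat\omega}\neq0$ gives $\partial_v\hat\omega_-\neq0$. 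Next, $f_v=e_-f_z+e_+f_{\bar z}=2\Re(e_-f_z)$ and $e_-\hat\omega=\hat\omega_-e_-$, so $f_v(p)=0$, i.e.\ $\eta=\partial_v$. Hence $\eta(\hat\omega_-)\neq0$, and $\eta$ is transverse to $\gamma=\{\hat\omega_-=0\}$. The curve $\gamma$ is itself tangent to the other null line $\partial_u$; this is the paper's $\gamma'=j\overline{\hat\omega_z}\hat\omega$ rewritten. With this inserted, your argument is complete.

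\textbf{A smaller fix in Step 1.} Your justification of rank one is wrong. The vector $(g^2+1,j(g^2-1),2g)$ is always null, since that is exactly $g_+(f_z,f_z)=0$. Writing $g=G_+e_++G_-e_-$, what you actually need is
\[
f_u=\tau^{-1}\hat\omega_+\bigl((G_+^2+1)E_1+(G_+^2-1)E_2+2G_+E_3\bigr)\neq0 .
\]
This holds because $G_+^2+1\geq1$, independently of $1-|g|^2$.
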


\begin{proof}
Let $z\in \Sigma^\omega$ be a non-degenerate singular point.
Then, by Lemma \ref{lemma:nondeg}, without loss of generality we may assume that the image of the singular curve $\gamma = \gamma(t)$ with $z=\gamma(0)$ also lies on $\Sigma^\omega \setminus \Sigma^g$. Based on this fact, we find the singular direction $\gamma'(0)$ and the null direction $\eta(0)$ below. 

Taking the derivative of $\hat{\omega}\overline{\hat{\omega}}(\gamma(t))=0$, we have 
\[
\left( \hat{\omega}\overline{\hat{\omega}}\right)_z\frac{d\gamma}{dt}+\overline{\left( \hat{\omega}\overline{\hat{\omega}}\right)_z}\overline{\frac{d\gamma}{dt}}=0.
\]
By Lemma \ref{lemma:nondeg}, $\left( \hat{\omega}\overline{\hat{\omega}}\right)_z \neq 0$ holds on $\gamma$ and hence we can take the singular direction
\begin{equation}\label{eq:sing_dir}
\gamma'=\frac{d\gamma}{dt}=j\overline{\left( \hat{\omega}\overline{\hat{\omega}}\right)_z}=j\overline{\hat{\omega}_z}\hat{\omega}.
\end{equation}
On the other hand, since the differential $df$ is given by
\[
df
=
\frac{1}{\tau}\left\{ \left( (g^2+1)\omega + ({\bar{g}}^2+1)\bar{\omega} \right) E_1+j \left( (g^2-1)\omega - ({\bar{g}}^2 -1)\bar{\omega} \right)E_2+2\left( g\omega + \bar{g}\bar{\omega} \right)E_3 \right\},
\]
the null vector filed $\eta$ along $\gamma$ is $\eta(t) =\overline{\hat{\omega}}$ in this setting. Therefore, we obtain
\begin{equation}\label{eq:det}
\det{\left(\gamma', \eta\right)}= \Im{\left(\overline{\gamma'}\eta\right)}
=-\Re{\left( \hat{\omega}^2\overline{\hat{\omega}_z}\right)},
\end{equation}
and we will show that it never vanishes below.

If we assume that $\det{\left(\gamma', \eta\right)}=0$, then $\Re{\left( \hat{\omega}^2\overline{\hat{\omega}_z}\right)}=0$ by \eqref{eq:det}. 
Since $|\hat{\omega}^2\overline{\hat{\omega}_z}|^2=|\hat{\omega}|^2=0$, it means $\hat{\omega}^2\overline{\hat{\omega}_z}=0$ at $z$. By using the relation $|\hat{\omega}|^2=0$ again, $\hat{\omega}$ can be written as $\hat{\omega}=k(1\pm j)$ for some non-zero real number $k$ at $z$. Hence, $\hat{\omega}^2\overline{\hat{\omega}_z}=0$ implies $\hat{\omega}_z=l(1\pm j)$ for some non-zero real number $l$. 
By using these $\hat{\omega}$ and $\hat{\omega}_z$, we have
\[
\hat{\omega}_z\overline{\hat{\omega}}=kl(1\pm j)(1\mp j)=0,
\]
which contradicts to the non-degeneracy of $z$ in Lemma \ref{lemma:nondeg}. Therefore, we obtain $\det{\left(\gamma', \eta\right)}\neq 0$ at $z$ proving the assertion.
\end{proof}

%%%%%%%%%%%%%%%%%%%%%%%%%%%%%%%%%%
\subsection{Singularities in $\Sigma^g \setminus \Sigma^\omega$}
In this subsection, we treat non-degenerate singular points in $\Sigma^g$. Since singular points in $\Sigma^g \cap \Sigma^\omega$ are degenerate by Lemma \ref{lemma:nondeg}, we may restrict ourselves to singular points belonging to $\Sigma^g \setminus \Sigma^\omega$. We first derive a non-degeneracy condition for singular points in $\Sigma^g \setminus \Sigma^\omega$ from Lemma \ref{lemma:lambda}.
\begin{lemma}\label{lem: non-degeneracy of g-singular}
A singular point $z$ in $\Sigma^g \setminus \Sigma^\omega$ of a timelike minimal surface $f$ in $\Nil(\tau)$ with a regular Lorentzian harmonic map $g$ is non-degenerate if and only if $(|g|^2)_z \neq 0$ at $z$, or equivalently
\[
\frac{g_z}{g^2\hat\omega} \neq 4j.
\]
\end{lemma}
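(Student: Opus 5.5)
The plan is to differentiate the expression \eqref{eq:lambda} for $\lambda$ from Lemma \ref{lemma:lambda} at a point $z\in\Sigma^g\setminus\Sigma^\omega$. At such a point $1-|g|^2=0$, so the product rule shows that only the derivative of the factor $(1-|g|^2)$ survives:
\[
d\lambda_z=\frac{2}{\tau^2}|\hat\omega|^2\sqrt{2(g+\bar g)^2+(1-|g|^2)^2}\; d(|g|^2)_z .
\]
This reduces the first equivalence to checking two things.

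First, the prefactor is nonzero. We have $|\hat\omega|^2\neq0$ because $z\notin\Sigma^\omega$. For the square root, at $|g|^2=1$ it equals $\sqrt2\,|g+\bar g|$, the absolute value of the real number $g+\bar g=2\Re g$. If $g+\bar g=0$, then $g=jy$ with $y\in\R$, so $|g|^2=-y^2\le0$, contradicting $|g|^2=1$. Hence the square root is positive and smooth near $z$.

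Second, for the real function $h=|g|^2$ we have $h_z=\tfrac12(h_x+jh_y)$, and this paracomplex number vanishes exactly when $h_x=h_y=0$. So $dh_z=0$ if and only if $(|g|^2)_z=0$. This gives the first equivalence: $z$ is non-degenerate if and only if $(|g|^2)_z\neq0$.

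For the reformulation, write $(|g|^2)_z=g_z\bar g+g\bar g_z$. Multiplying the definition $\hat\omega=-j\bar g_z/(1+|g|^2)^2$ by $j$ and using $j^2=1$ gives $\bar g_z=-j(1+|g|^2)^2\hat\omega$, which is the conjugate of the second relation in \eqref{eq: Dirac equation3}. At $|g|^2=1$ this becomes $\bar g_z=-4j\hat\omega$. Moreover $g$ is invertible there by Proposition \ref{prop: inverse}, with $\bar g=1/g$. Hence
\[
(|g|^2)_z=\frac{1}{g}\bigl(g_z-4j\,g^2\hat\omega\bigr).
\]
Since $|g^2\hat\omega|^2=|g|^4|\hat\omega|^2=|\hat\omega|^2\neq0$, the number $g^2\hat\omega$ is invertible, again by Proposition \ref{prop: inverse}. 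Dividing by it, $(|g|^2)_z\neq0$ is equivalent to $g_z/(g^2\hat\omega)\neq4j$.

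The only delicate points are the sign conventions in $\hat\omega$, which are fixed by the computation above, and the observation that $g+\bar g\neq0$ on $\Sigma^g$. Both are elementary, so I expect no real obstacle.
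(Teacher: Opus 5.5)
Your proposal is correct and follows essentially the same route as the paper. You differentiate the expression for $\lambda$ at a point where $|g|^2=1$, use $|\hat\omega|^2\neq0$ and $g+\bar g\neq0$ to reduce non-degeneracy to $(|g|^2)_z\neq0$, and rewrite this via $\bar g_z=-4j\hat\omega$ and $\bar g=1/g$. Your argument that $g+\bar g\neq0$ (a purely imaginary $g$ has $|g|^2\le0$) and your factorization $(|g|^2)_z=g^{-1}\bigl(g_z-4jg^2\hat\omega\bigr)$ are simply more explicit versions of the paper's hyperbola remark and its division by $g\bar g_z$.
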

\begin{proof}
The signed area density function $\lambda$ of $f$ satisfies
\[
d\lambda = -\frac{2}{\tau^2} |\hat\omega|^2 \sqrt{2(g+\bar{g})^2} \left(-(|g|^2)_zdz -(|g|^2)_{\bar{z}}d\bar{z} \right)
\quad
\text{at $z$}.
\]
Since $g(z)$ lies on the hyperbola $\{\pm \cosh t + j \sinh t \in \C'|\ t \in \R \}$, we have $g+\bar{g} \neq 0$ at $z$.
Hence the non-degeneracy condition $d\lambda(z) \neq 0$ is equivalent to
\begin{equation}\label{eq: non-degeneracy of g-singular}
(|g|^2)_z \neq 0 \quad \text{at $z$}.
\end{equation}
Moreover, the assumption $|\hat\omega(z)|^2 \neq 0$ allows us to divide by $(g \bar{g}_z)(z)$. Therefore, using \eqref{eq: Dirac equation3}, the condition \eqref{eq: non-degeneracy of g-singular} can be rewritten as
\[
0\neq
\frac{g_z \bar{g} + g \bar{g}_z}{g \bar{g}_z}=
\frac{g_z}{g^2 \bar{g}_z} + 1=
\frac{g_z}{g^2 (-4j \hat\omega)} + 1.
\]
This completes the proof.
\end{proof}
Since the singular curve $\gamma$ of a timelike minimal surface $f$ with a regular Lorentzian harmonic map $g$ satisfies $|g(\gamma(t))|^2=1$ in this case, some types of non-degenerate singular points in $\Sigma^g \setminus \Sigma^\omega$ can be characterized in terms of the normal Gauss map $g$ of $f$ by applying Facts \ref{Fact: KRSUY} and \ref{Fact: FSUY}.
\begin{theorem}\label{thm: criteria}
Let $z$ be a non-degenerate singular point in $\Sigma^g \setminus \Sigma^\omega$ of a timelike minimal surface $f$ with a regular Lorentzian harmonic map $g$. Then the following assertions hold.
\begin{enumerate}
\item $f$ is a front at $z$ if and only if
\[
\Re \frac{g_z}{g^2\hat\omega} \neq 0 \quad \text{at $z$},
\]
\item $f$ is $\mathcal{A}$-equivalent to the cuspidal edge at $z$ if and only if
\[
\Re \frac{g_z}{g^2\hat\omega} \neq 0,\quad
\Im \frac{g_z}{g^2\hat\omega} \neq 4\quad
\text{at $z$},
\]
\item $f$ is $\mathcal{A}$-equivalent to the swallowtail at $z$ if and only if
\[
\Re \frac{g_z}{g^2\hat\omega} \neq 0,
\quad
\Im \frac{g_z}{g^2\hat\omega} = 4,
\quad
\Im \left[ (|g|^2)_{\bar{z}} \left( \Im \frac{g_z}{g^2\hat\omega} \right)_z \right] \neq 0 \quad \text{at $z$},
\]
\item $f$ is $\mathcal{A}$-equivalent to the cuspidal cross cap at $z$ if and only if
\[
\Re \frac{g_z}{g^2\hat\omega} = 0,
\quad
\Im \frac{g_z}{g^2\hat\omega} \neq 4,
\quad
\Im \left[ (|g|^2)_{\bar{z}} \left( \Re \frac{g_z}{g^2\hat\omega} \right)_z \right] \neq 0 \quad \text{at $z$},
\]
\end{enumerate}
\end{theorem}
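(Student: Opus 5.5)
The plan is to apply Facts \ref{Fact: KRSUY} and \ref{Fact: FSUY} directly. We work near a non-degenerate point $z\in\Sigma^g\setminus\Sigma^\omega$, where the singular curve $\gamma$ lies in $\{|g|^2=1\}$, and compute three quantities along $\gamma$ in terms of $g$ and $\hat\omega$:
\begin{itemize}
\item[(a)] the singular direction $\gamma'$,
\item[(b)] the null direction $\eta$,
\item[(c)] the front condition, namely whether $L=(f,N_R)$ is an immersion, together with the function $\det(df(\gamma'),N_R,dN_R(\eta))$.
\end{itemize}
Throughout, we set $\zeta:=g_z/(g^2\hat\omega)$ and use the Dirac-type relations \eqref{eq: Dirac equation3}. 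These give $\bar g_z=-4j\ldots$; more precisely $\bar g_z=-j(1+|g|^2)^2\hat\omega=-4j\hat\omega$ on $\gamma$, so that every derivative of $g$ along $\gamma$ can be written through $g_z$ and $\hat\omega$.

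\textbf{Step 1: directions.} Differentiating $|g|^2(\gamma(t))=1$ gives $\Re\big((|g|^2)_z\gamma'\big)=0$, exactly as in \eqref{eq:sing_dir}. We may therefore take $\gamma'=j\,\overline{(|g|^2)_z}$; this is nonzero by Lemma \ref{lem: non-degeneracy of g-singular}.

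For the null direction, note that on $|g|^2=1$ we have $1/g=\bar g$. Hence the expression for $df$ with coefficients $(g^2+1)\omega$, etc., degenerates, and the kernel is spanned by a vector $\eta$ solving $g\hat\omega\,\eta+\overline{g\hat\omega\,\eta}=0$. Here one should check that the $E_1,E_2,E_3$ components all reduce to the single real condition $\Re(g\hat\omega\eta)=0$, using $g^2+1=g(g+\bar g)$ and $g^2-1=g(g-\bar g)$. This yields $\eta=j\,\overline{g\hat\omega}$, i.e.\ $\eta=j/(g\hat\omega)$ up to a real factor.

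Then $\det(\gamma',\eta)=\Im(\overline{\gamma'}\eta)$, with $(|g|^2)_z=g_z\bar g+g\bar g_z=\bar g g^2\hat\omega(\zeta-4j)$. This gives, up to a nonvanishing factor $|g\hat\omega|^2$ of fixed sign, $\det(\gamma',\eta)\propto\Re\big(j(\bar\zeta+4j)\big)\propto\Im\zeta-4$. Combined with Fact \ref{Fact: KRSUY}(i), this produces the condition $\Im\zeta\neq4$ in (2).

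\textbf{Step 2: the front condition and $\psi:=\det(df(\gamma'),N_R,dN_R(\eta))$.} Since $f$ is a front exactly when $dN_R(\eta)\neq0$ (as $df(\eta)=0$), we compute $dN_R(\eta)$. The scalar normalizing factor contributes only a term parallel to $N_R$, which is irrelevant for both the front condition and the determinant. So it suffices to differentiate the vector $V=-(g+\bar g)E_1+j(g-\bar g)E_2+(1+|g|^2)E_3$ in direction $\eta$ and project onto the span of $df(\gamma')$, modulo $V$. The derivatives $g_z\eta+g_{\bar z}\bar\eta$ simplify via $g_{\bar z}=4j\bar{\hat\omega}$ and $\eta=j\overline{g\hat\omega}$, and I expect the result to be a real multiple of $\Re\zeta$ times a fixed vector independent of $V$. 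This gives (1), and (2) follows from (1) together with Step 1. The real-valued function $\psi(t)$ along $\gamma$ should then equal a nonvanishing factor times $\Re\zeta$.

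\textbf{Step 3: derivatives along $\gamma$.} For the swallowtail, Fact \ref{Fact: KRSUY}(ii) requires $\frac{d}{dt}\det(\gamma',\eta)\neq0$ where $\Im\zeta=4$. Since $\det(\gamma',\eta)=c(t)(\Im\zeta-4)$ with $c\neq0$, this derivative equals $c\,\frac{d}{dt}\Im\zeta=c\cdot2\Re\big((\Im\zeta)_z\gamma'\big)$. Substituting $\gamma'=j\overline{(|g|^2)_z}$ gives $2\Re\big(j\,\overline{(|g|^2)_z}(\Im\zeta)_z\big)$, which is a multiple of the imaginary part appearing in (3); one has to watch signs and conjugation conventions here. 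The cuspidal cross cap case follows identically from Fact \ref{Fact: FSUY}, with $\psi\propto\Re\zeta$, giving (4).

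I expect Step 2, the explicit computation of $dN_R(\eta)$ modulo $N_R$ and showing that it is governed exactly by $\Re\zeta$, to be the main obstacle. My approach there is to evaluate everything at points with $g=\pm\cosh s+j\sinh s$, which reduces the identities to a manageable paracomplex computation.
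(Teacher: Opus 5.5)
Your proposal is correct and follows essentially the same route as the paper: the same singular direction $\gamma'=j(|g|^2)_{\bar z}$, the null direction $\eta=\pm j\overline{g\hat\omega}$, $\det(\gamma',\eta)$ equal to a nonzero multiple of $\Im\zeta-4$, and then Facts~\ref{Fact: KRSUY} and~\ref{Fact: FSUY}. The only difference is that you test the front condition by $dN_R(\eta)\neq 0$, whereas the paper computes the kernel $\mu=\overline{\bar g g_z-g\bar g_z}$ of $dN_R$ and checks $\det(\eta,\mu)\neq 0$; the two tests are equivalent, and the computation you defer comes out as you expect, namely $dV(\eta)\equiv 2|\hat\omega|^2\,\Re\zeta\,(-\Im g\,E_1+\Re g\,E_2)$ modulo $V$, so that $\psi=\frac{4|\hat\omega|^4}{\tau}(\Im\zeta-4)\Re\zeta$, where the extra factor $\Im\zeta-4$ coming from $df(\gamma')$ is nonzero exactly by the transversality hypothesis.
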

\begin{proof}
At a non-degenerate singular point in $\Sigma^g \setminus \Sigma^\omega$, a straightforward calculation yields
\[
df
=f_zdz + f_{\bar{z}}d\bar{z}
=\frac{1}{\tau}(g\omega + \bar{g}\bar{\omega}) \left\{ (g+\bar{g})E_1 + j (g-\bar{g})E_2 + 2E_3 \right\}.
\]
Therefore, the null direction $\eta$ of $df$ is $\eta(t)=-j\overline{g \hat\omega}$, where each tangent plane is identified with $\C'$ as $a \frac{\partial}{\partial z} + \bar{a}\frac{\partial}{\partial \bar{z}} \cong a \in \C'$.

On the other hand, the null direction of $dN_R$ satisfying $dN_R(\mu) = 0$ is given by $\mu(t)=\left(\overline{\bar{g}g_z - g \bar{g}_z}\right)(\gamma(t))$, since a simple calculation based on \eqref{eq:N_R} yields
\begin{align}\label{eq: dNR}
dN_R
=&
%d \left[ \frac{1}{\sqrt{2(g+\bar{g})^2 + (1-|g|^2)^2}} \right] \left\{ -(g+\bar{g})E_1 +j(g-\bar{g}) E_2 +(1+|g|^2) E_3 \right\}\\
%&+
%\frac{1}{\sqrt{2(g+\bar{g})^2 + (1-|g|^2)^2}} d\left[ -(g+\bar{g})E_1 +j(g-\bar{g}) E_2 +(1+|g|^2) E_3 \right]\\
%=&
%\frac{j (\bar{g}dg - gd\bar{g})}{\sqrt {2|g+\bar{g}| (g+\bar{g})} }\left\{ 2E_2 -j(g-\bar{g}) E_3 \right\}.
\frac{j (\bar{g}dg - gd\bar{g})}{(g+\bar{g})\sqrt {2(g+\bar{g})^2} }\left\{ 2E_2 -j(g-\bar{g}) E_3 \right\}.
\end{align}
Hence, by using the second equation in \eqref{eq:omega_z},
\[
\det (\eta(t)\ \mu(t))
%= (\Re \eta)(\Im \mu)(0) - (\Im \eta)(\Re \mu)(0)
=\Im (\overline{\eta(t)} \mu(t))
=|\hat\omega|^2 \Im \left( j \overline{\frac{g_z}{g^2\hat\omega}} - 4 \right)
=|\hat\omega|^2 \Re \left( \frac{g_z}{g^2\hat\omega} \right).
\]
Thus, the assertion (1) follows.

Next, we rewrite the criterion for the cuspidal edge. 
Since the singular curve $\gamma$ of $f$ satisfies $|g(\gamma(t))|^2 = 1$ for $t$ near $0$, then one may take
\begin{equation}\label{eq: gamma'}
\gamma'(t) 
= j \left(\bar g g_{\bar{z}} + g \overline{g_z}\right)(\gamma(t))=j\left( |g|^2\right)_{\bar{z}}(\gamma(t)).
\end{equation}
Hence, based on \eqref{eq: Dirac equation3}, a straightforward computation shows
\begin{equation}\label{eq: det}
\det\left(\gamma'\ \eta\right)
= \Im \left( \overline{\gamma'} \eta \right)
= |\hat\omega|^2 \Im \left( \frac{g_z}{g^2\hat\omega} + \frac{\bar{g}_z}{\hat\omega} \right)
%= \epsilon j \Im \left( \frac{g_z}{g^2\hat\omega} + \frac{\bar{g}_z}{\hat\omega} \right)
= |\hat\omega|^2 \Im \left( \frac{g_z}{g^2\hat\omega} -4j \right).
%= \epsilon j \Im \left( \frac{g_z}{g^2\hat\omega} -4j \right).
\end{equation}
Therefore, by Fact \ref{Fact: KRSUY}, the assertion (2) follows.

Next, we prove (3). 
By Fact \ref{Fact: KRSUY}, a non-degenerate singular point $z=\gamma(0)$ 
induces a swallowtail if and only if $t=0$ is a simple zero of $\det(\gamma'(t)\,\eta(t))$.
From \eqref{eq: gamma'} and \eqref{eq: det} and by assuming that $\det\left(\gamma'\ \eta\right)=0$ at $t=0$, a direct computation yields, at $t=0$,
\begin{align}
\frac{d}{dt}\det(\gamma'\ \eta)
&=
|\hat\omega|^2
\frac{d}{dt}\left(\Im \frac{g_z}{g^2\hat\omega} \right) \\
&=
2 |\hat\omega|^2\Im \left[ (|g|^2)_{\bar{z}} \left( \Im \frac{g_z}{g^2\hat\omega} \right)_z \right].
\end{align}
This proves (3).

Finally, we prove the assertion (4). A straightforward calculation using $|g(\gamma(t))|^2=1$, \eqref{eq:N_R}, \eqref{eq: dNR}, and \eqref{eq: gamma'} yields
\begin{align}
\Psi(t) 
&=
\det\Bigl( df(\gamma'(t)), dN_R(\eta(t)), N_R(\gamma(t)) \Bigr) \\
&= \frac{4| \hat\omega|^4}{\tau} 
\left( \Im \frac{g_z}{g^2\hat\omega} -4 \right) 
\left(\Re \frac{g_z}{g^2\hat\omega} \right).
\end{align}
By Fact \ref{Fact: FSUY}, $f$ is $\mathcal{A}$-equivalent to a cuspidal cross cap at $z=\gamma(0)$ if and only if the singular direction $\gamma'(0)$ and the null direction $\eta(0)$ are transverse, and $t=0$ is a zero of $\Psi$ of order~$1$.
First, by \eqref{eq: det}, it is clear that the transversality of the singular direction $\gamma'(0)$ and the null direction $\eta(0)$ is equivalent to
\begin{equation}\label{eq: ccr2}
\Im \frac{g_z}{g^2\hat\omega} \neq 4 \quad \text{at $z$}.
\end{equation}
Assuming \eqref{eq: ccr2}, the condition $\Psi(0)=0$ is equivalent to
\[
\Re \frac{g_z}{g^2\hat\omega} = 0 \quad \text{at $z$}.
\]
 A straightforward computation then shows that, at such a point,
\begin{align}
\frac{d\Psi}{dt} 
= \frac{8 |\hat\omega|^4}{ \tau } 
\left( \Im \frac{g_z}{g^2\hat\omega} -4 \right) 
\Im \left[ (|g|^2)_{\bar{z}} \left( \Re \frac{g_z}{g^2\hat\omega} \right)_z \right].
\end{align}
This proves assertion (4).
\end{proof}

\begin{remark}
If we define the function
\[
  \hat{A} := \frac{j}{\tau} \hat{\omega},
  %  \hat{A} := \frac{\epsilon}{\tau} \hat{\omega},
\]
we can rewrite the three criteria in Theorem~\ref{thm: criteria} as follows:
$f$ is $\mathcal{A}$-equivalent to
\begin{itemize}
\item[(2')]
cuspidal edge at $z$ if and only if
\[
  \Im \frac{g_z}{g^2\hat{A}} \neq 0\quad
  \text{and}\quad
  \Re \frac{g_z}{g^2\hat{A}} \neq 4 \tau \quad
  %  \Re \frac{g_z}{g^2\hat{A}} \neq 4\epsilon j \tau \quad
  \text{at $z$},
\]
\item[(3')]
swallowtail at $z$ if and only if
\[
  \Im \frac{g_z}{g^2\hat{A}} \neq 0,\quad
  \Re \frac{g_z}{g^2\hat{A}} = 4 \tau, \quad
  %  \Re \frac{g_z}{g^2\hat{A}} = 4\epsilon j \tau, \quad
  \text{and}\quad 
  \Im \left[(|g|^2)_{\bar{z}} \left(\Re \frac{g_z}{g^2\hat{A}}\right)_z\right] \neq 0 \quad
  \text{at $z$},
\]
\item[(4')]
cuspidal cross cap at $z$ if and only if
\[
  \Im \frac{g_z}{g^2\hat{A}} = 0,\quad
  \Re \frac{g_z}{g^2\hat{A}} \neq 4 \tau, \quad
  %  \Re \frac{g_z}{g^2\hat{A}} \neq 4\epsilon j \tau, \quad
  \text{and}\quad
  \Im \left[(|g|^2)_{\bar{z}} \left(\Im \frac{g_z}{g^2\hat{A}}\right)_z\right] \neq 0 \quad
  \text{at $z$}.
\]
\end{itemize}
Using $\hat{A}$, the derivative $f_z$ can be written as
\[
  f_z
  =
  j (g^2+1)\hat{A}E_1
  +
  (g^2-1)\hat{A}E_2
  +
  2 j g\hat{A}E_3.
\]
Accordingly, the Gauss--Weingarten equations \eqref{eq: Dirac equation3} become
\[
  \hat{A}_{\bar{z}}
  =
  2 \tau |\hat{A}|^2(1+|g|^2) \bar{g},
  \quad
  g_{\bar{z}}
  =
  - \tau (1+|g|^2)^2 \overline{\hat{A}}.
\]
Therefore, formally letting $\tau=0$, we see that $g$ becomes para-holomorphic.
As a consequence, simple calculations yield
\[
  (|g|^2)_{\bar{z}} = g\bar{g}_{\bar{z}}\quad
  \text{and}\quad
  \quad
  \left(\frac{g_z}{g^2\hat{A}}\right)_{\bar{z}}
  =
  -4 \tau \bar{g} \bar{\hat{A}} \frac{g_z}{g^2\hat{A}}
  = 0.
\] 
Hence, we obtain
\begin{align}
  \Im\!\left[(|g|^2)_{\bar z}\left(\Re\frac{g_z}{g^2\hat{A}}\right)_z \right]
  &=
  \frac{|g_z|^2}{2}
  \left\{
    \Im\!\left[
      \frac{g}{g_z}
      \left(\frac{g_z}{g^2\hat{A}}\right)_z
    \right]
  \right\},\\
  \Im\!\left[(|g|^2)_{\bar z}\left(\Im\frac{g_z}{g^2\hat{A}}\right)_z \right]
  &=
  \frac{|g_z|^2}{2}
  \left\{
    \Re\!\left[
      \frac{g}{g_z}
      \left(\frac{g_z}{g^2\hat{A}}\right)_z
    \right]
  \right\}.
\end{align}
These relations formally recover the criteria for singularities of timelike minimal surfaces in Minkowski space (see \cite{Akamine, Takahashi}).
\end{remark}

Via the correspondence between a timelike minimal surface $f$ and a timelike CMC $\tau$ surface $f_L$ in $\L$ given in the subsection \ref{subsec: correspondence}, it follows that a singular point $z$ of $f$ belongs to $\Sigma^g$ if and only if the condition
\[
\langle N_L, e_3 \rangle = 0 \quad \text{at $z$}
\]
holds where $N_L$ is the unit normal vector field of $f_L$. The derivative of the third component of $N_L$ is
\[
\frac{\partial}{\partial z} \langle N_L, e_3 \rangle = \frac{\partial}{\partial z} \left( - \frac{1-|g|^2}{1+|g|^2} \right) = \frac{2(|g|^2)_z}{(1+|g|^2)^2}.
\]
Hence the following lemma is an immediate consequence of Lemma \ref{lem: non-degeneracy of g-singular}.
\begin{lemma}
Let $z$ be a singular point of a timelike minimal surface $f$ in $\Nil(\tau)$ with a regular Lorentzian harmonic map $g$ and assume that $z \in \Sigma^g$. Then $z$ is non-degenerate if and only if $\langle dN_L, e_3 \rangle \neq 0$ at $z$.
\end{lemma}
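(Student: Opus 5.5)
The plan is to reduce the statement to Lemma \ref{lem: non-degeneracy of g-singular}, using the identity for $\partial_z\langle N_L, e_3\rangle$ computed just above.

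First, recall what Lemma \ref{lem: non-degeneracy of g-singular} gives. A point $z\in\Sigma^g\setminus\Sigma^\omega$ is non-degenerate if and only if $(|g|^2)_z\neq 0$ at $z$. Since the hypothesis only says $z\in\Sigma^g$, the case $z\in\Sigma^g\cap\Sigma^\omega$ needs separate treatment; I return to it in the last paragraph.

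Next, express $\langle dN_L, e_3\rangle$ in complex terms. The function $h:=\langle N_L,e_3\rangle=-(1-|g|^2)/(1+|g|^2)$ is real valued, and $|g|^2\neq -1$ by standing assumption, so $h$ is smooth. Then
\[
dh = h_z\,dz + h_{\bar z}\,d\bar z = 2\,\Re(h_z\,dz),
\]
and $dh=0$ at a point if and only if $h_z=0$ there. The preceding computation gives
\[
h_z=\frac{2(|g|^2)_z}{(1+|g|^2)^2}.
\]
The denominator is finite and nonzero; on $\Sigma^g$ it equals $4$. Hence $\langle dN_L,e_3\rangle\neq0$ at $z$ if and only if $(|g|^2)_z\neq0$ at $z$. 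Combined with Lemma \ref{lem: non-degeneracy of g-singular}, this settles the case $z\in\Sigma^g\setminus\Sigma^\omega$.

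The only delicate point is the case $z\in\Sigma^g\cap\Sigma^\omega$. Lemma \ref{lemma:nondeg} shows that such points are always degenerate, so the equivalence would require $(|g|^2)_z=0$ there. By \eqref{eq: Dirac equation3}, $g_{\bar z}=j(1+|g|^2)^2\overline{\hat\omega}$, and conjugating gives $\bar g_z = -j(1+|g|^2)^2\hat\omega$. Therefore
\[
(|g|^2)_z=\bar g\,g_z - j\,g(1+|g|^2)^2\hat\omega.
\]
Neither term vanishes automatically when only $|\hat\omega|^2=0$, so this case does not reduce to the same equivalence. I would resolve it by reading the statement, as the remark preceding it indicates, as an immediate consequence of Lemma \ref{lem: non-degeneracy of g-singular}. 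That is, it is tacitly restricted to $\Sigma^g\setminus\Sigma^\omega$, which is the setting of the whole subsection, and I would state explicitly that points of $\Sigma^g\cap\Sigma^\omega$ are excluded because Lemma \ref{lemma:nondeg} already classifies them as degenerate.
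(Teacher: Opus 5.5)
Your argument is correct and matches the paper's: the paper also obtains the lemma by combining $\partial_z\langle N_L,e_3\rangle=2(|g|^2)_z/(1+|g|^2)^2$ with Lemma~\ref{lem: non-degeneracy of g-singular}, tacitly working in $\Sigma^g\setminus\Sigma^\omega$ as the whole subsection does. Your caution about $\Sigma^g\cap\Sigma^\omega$ is justified: there $(|g|^2)_z\neq0$ is possible (the harmonic map equation leaves the first-order data of $g$ at a point free), while Lemma~\ref{lemma:nondeg} forces degeneracy, so the literal statement fails there and restricting to $\Sigma^g\setminus\Sigma^\omega$ is necessary, not just convenient.
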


The singular curve $\gamma$ consisting of non-degenerate singular points in $\Sigma^g$ of $f$ gives rise to a regular curve in the corresponding $f_L$. Based on this fact, singularities appearing in Theorem \ref{thm: criteria} can be characterized in terms of the corresponding regular curve, rather than the normal Gauss map $g$.
\begin{theorem}\label{thm: criteria2}
Let $z$ be a non-degenerate singular point of a timelike minimal surface $f$ in $\Nil(\tau)$ with a regular Lorentzian harmonic map $g$ belonging to $\Sigma^g$. Moreover let denote $\gamma$ the singular curve of $f$ passing through $z$ at $t=0$, and put $\gamma_L:=f_L \circ \gamma$. Then, the following statements hold.
\begin{itemize}
\item[(1)]
$f$ is a front at $z$ if and only if $\langle {\gamma_L}', e_3 \rangle \neq 0$ at $z$,
\item[(2)]
$f$ is $\mathcal{A}$-equivalent to the cuspidal edge at $z$ if and only if
\[
\langle {\gamma_L}', e_3 \rangle \neq 0
\quad \text{and} \quad
{\gamma_L}' \nparallel e_3
\quad \text{at $z$},
\]
\item[(3)]
$f$ is $\mathcal{A}$-equivalent to the swallowtail at $z$ if and only if
\[
\langle {\gamma_L}', e_3 \rangle \neq 0,
\quad
{\gamma_L}' \parallel e_3,
\quad \text{and} \quad
{\gamma_L}'' \nparallel e_3,
\quad \text{at $z$},
\]
\item[(4)]
$f$ is $\mathcal{A}$-equivalent to the cuspidal cross cap at $z$ if and only if
\[
\langle {\gamma_L}', e_3 \rangle = 0,
\quad
{\gamma_L}'' \nparallel e_3,
\quad \text{and} \quad
\langle {\gamma_L}'', e_3 \rangle \neq 0,
\quad \text{at $z$}.
\]
\end{itemize}
\end{theorem}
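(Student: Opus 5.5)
Theorem~\ref{thm: criteria2} will follow from Theorem~\ref{thm: criteria} once the quantities appearing there are expressed through $\gamma_L'$ and $\gamma_L''$. We compute $\gamma_L' = (f_L)_z\gamma' + (f_L)_{\bar z}\overline{\gamma'}$ using \eqref{eq: relation}, so that $\gamma_L' = 2\Re\bigl((f_L)_z\gamma'\bigr)$ with $(f_L)_z = \frac{1}{\tau}{}^t\bigl((g^2+1)\hat\omega,\ j(g^2-1)\hat\omega,\ 2jg\hat\omega\bigr)$, and we take $\gamma' = j(|g|^2)_{\bar z}$ as in \eqref{eq: gamma'}. Along $\gamma$ we have $|g|^2=1$, so $\bar g = 1/g$. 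This lets us simplify everything in terms of $g$, $\hat\omega$ and the quantity $X := g_z/(g^2\hat\omega)$; we also use $\bar g_z = -4jg^2\hat\omega\cdot\frac{1}{g^2}\cdot\ldots$ rewritten through \eqref{eq: Dirac equation3}, namely $\bar g_{z} = -4j\hat\omega$ on $|g|^2=1$.

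\textbf{Step 1: the third component.} We write $(|g|^2)_{\bar z} = \bar g g_{\bar z} + g\bar g_{\bar z}$. Here $g_{\bar z} = 4j\overline{\hat\omega}$ on $\Sigma^g$, and $\bar g_{\bar z} = \overline{g_z} = \overline{X}\,\overline{g^2\hat\omega}$. This gives
\[
g\hat\omega\,\gamma' = j|\hat\omega|^2\bigl(4j + \overline{X}\bigr)
\]
up to a unit factor, since $|g|^2=1$. Taking the real part of $2j\,g\hat\omega\,\gamma'$, we expect $\langle\gamma_L',e_3\rangle = c\,|\hat\omega|^2\Re X$ for a nonzero constant $c$, because the term $4j$ drops out after the extra factor $j$. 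With Theorem~\ref{thm: criteria}(1) this yields assertion (1).

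\textbf{Step 2: horizontal part.} In the same way, the first two components of $\gamma_L'$ should be proportional to $|\hat\omega|^2(\Im X - 4)$ times a nonvanishing vector determined by $g$ on the hyperbola $|g|^2=1$. The reason is that $\det(\gamma',\eta)$ in \eqref{eq: det} measures exactly the projection of $\gamma_L'$ transverse to $e_3$. We verify this by expanding $\Re\bigl((g^2\pm1)\hat\omega\gamma'\bigr)$ using $g = \pm\cosh s + j\sinh s$. Hence $\gamma_L'\parallel e_3$ if and only if $\Im X = 4$, and (2) follows from Theorem~\ref{thm: criteria}(2).

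\textbf{Step 3: second derivatives.} For (3), on $\{\Im X = 4\}$ we differentiate the horizontal part, which equals $(\Im X - 4)V(t)$ with $V \neq 0$. Its $t$-derivative at $t=0$ is $\frac{d}{dt}(\Im X)\,V$. So $\gamma_L''\nparallel e_3$ if and only if $\frac{d}{dt}\Im X \neq 0$, and $\frac{d}{dt}\Im X = 2\Im\bigl[(|g|^2)_{\bar z}(\Im X)_z\bigr]$ as computed in the proof of Theorem~\ref{thm: criteria}(3).

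For (4), when $\Re X = 0$, the same structure gives $\langle\gamma_L'',e_3\rangle = c\,|\hat\omega|^2\frac{d}{dt}\Re X$, since the derivative of the prefactor multiplies zero. This matches the last condition of Theorem~\ref{thm: criteria}(4). The transversality condition $\Im X\neq4$ corresponds to the horizontal part of $\gamma_L'$ being nonzero. I expect this to be expressed as ${\gamma_L}'\nparallel e_3$, although the statement writes $\gamma_L''$; I will check which form is equivalent.

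The main obstacle is Step 2: showing that the horizontal projection of $\gamma_L'$ factors cleanly as $(\Im X - 4)$ times a nonvanishing vector. This requires the careful paracomplex algebra on $|g|^2=1$ with the correct sign and unit conventions, for which I would parametrize $g$ by the hyperbola and check directly.
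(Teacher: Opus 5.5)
Your plan is the paper's proof. Like the paper, you compute $\gamma_L'=df_L(\gamma')$ with $\gamma'=j(|g|^2)_{\bar z}$, use $\bar g=1/g$ and $g_{\bar z}=4j\overline{\hat\omega}$ on $\Sigma^g$ to get $\gamma_L'=\frac{4|\hat\omega|^2}{\tau}\,{}^t\bigl(\Re(g)(4-\Im X),\ \Im(g)(4-\Im X),\ \Re X\bigr)$, and compare componentwise with Theorem~\ref{thm: criteria}; your Step~3 spells out the derivative comparison that the paper leaves implicit.

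Your suspicion about (4) is justified. The condition matching $\Im X\neq 4$ is $\gamma_L'\nparallel e_3$, and it is automatic there: non-degeneracy means $X\neq 4j$ (Lemma~\ref{lem: non-degeneracy of g-singular}), so $\gamma_L'$ is a nonzero horizontal vector when $\Re X=0$. The middle condition $\gamma_L''\nparallel e_3$ printed in (4) is presumably a typo for $\gamma_L'\nparallel e_3$.
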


We can see in \cite{BK} assertions similar to Theorem \ref{thm: criteria2} for the case of maximal surfaces in $(\Nil,g_-)$.

\begin{proof}
It follows from \eqref{eq: TCMC representation} and \eqref{eq: gamma'} that
\begin{align}
{\gamma_L}'
=
df_L({\gamma}')
%&=
%\frac{1}{\tau} \Re \left[ j \hat\omega \left( \overline{\left({\frac{g_z}{g}}\right)} +\frac{g_{\bar{z}}}{g} \right) \begin{pmatrix} g^2 +1\\ j(g^2-1)\\ 2jg \end{pmatrix} \right]\\
&=
\frac{2}{\tau} \Im \left[ \hat\omega \left( \overline{\left({\frac{g_z}{g}}\right)} +\frac{g_{\bar{z}}}{g} \right) \begin{pmatrix} g^2 +1\\ j(g^2-1)\\ 2jg \end{pmatrix} \right].
\end{align}
Based on the relations $|g(\gamma(t))|^2=1$ and $g_{\bar{z}}(\gamma(t)) = 4j \overline{\hat\omega (\gamma(t))}$, each component of the above equation can be computed as follows.
\begin{align}
 \Im \left[ \hat\omega \left( \overline{\left({\frac{g_z}{g}}\right)} +\frac{g_{\bar{z}}}{g} \right) (g^2 +1) \right]
&=
 \Im \left[ \hat\omega \overline{\left({\frac{g_z}{g}}\right)} (g^2 +1) + 4j|\hat\omega|^2 \left( g+\frac{1}{g} \right) \right]\\
&=
 \Im \left[ |\hat\omega|^2 \overline{\left(\frac{g_z}{g^2\hat\omega}\right)} \left( g+\frac{1}{g} \right) +4j|\hat\omega|^2 \left( g+\frac{1}{g} \right) \right]\\
%&=
%\frac{1}{\tau}\Im\left[ 2|\hat\omega|^2 \Re(g) \left( \overline{\left(\frac{g_z}{g^2\hat\omega}\right)} + 4j \right) \right]\\
&=
2|\hat\omega|^2 \Re(g) \left( 4- \Im \frac{g_z}{g^2\hat\omega} \right),
\end{align}
\begin{align}
 \Im \left[ \hat\omega \left( \overline{\left({\frac{g_z}{g}}\right)} +\frac{g_{\bar{z}}}{g} \right) j(g^2-1) \right]
&=
 \Im \left[ j\hat\omega \overline{\left({\frac{g_z}{g}}\right)} (g^2-1) + 4|\hat\omega|^2 \left( g-\frac{1}{g} \right)\right]\\
&=
 \Im \left[ j|\hat\omega|^2 \overline{\left(\frac{g_z}{g^2\hat\omega}\right)} \left( g-\frac{1}{g} \right) + 4|\hat\omega|^2 \left( g-\frac{1}{g} \right) \right]\\
%&=
%\frac{1}{\tau} \Im \left[ 2|\hat\omega|^2 \Im(g) \left( \overline{\left(\frac{g_z}{g^2\hat\omega}\right)} + 4j \right) \right]\\
&=
2|\hat\omega|^2 \Im(g) \left( 4- \Im \frac{g_z}{g^2\hat\omega} \right),
\end{align}
\begin{align}
 \Im \left[ \hat\omega \left( \overline{\left({\frac{g_z}{g}}\right)} +\frac{g_{\bar{z}}}{g} \right) 2jg \right]
&=
\Im \left[ \hat\omega \overline{\left({\frac{g_z}{g}}\right)} 2jg + 8|\hat\omega|^2 \right]\\
&=
 \Im\left[ 2j |\hat\omega|^2 \overline{\left(\frac{g_z}{g^2\hat\omega}\right)} + 8|\hat\omega|^2 \right]\\
&=
2|\hat\omega|^2 \Re \frac{g_z}{g^2\hat\omega}.
\end{align}
Hence ${\gamma_L}'$ has the following representation:
\begin{equation}\label{eq: gamma_L'}
{\gamma_L}' = \frac{4|\hat\omega|^2}{\tau} \begin{pmatrix} \Re (g) \left( 4- \Im \dfrac{g_z}{g^2\hat\omega} \right)\\ \Im (g) \left( 4- \Im \dfrac{g_z}{g^2\hat\omega} \right)\\ \Re \dfrac{g_z}{g^2\hat\omega} \end{pmatrix}.
\end{equation}
By comparing each component of \eqref{eq: gamma_L'} with the criteria of Theorem \ref{thm: criteria} and its proof, we obtain the assertions of Theorem \ref{thm: criteria2}.
\end{proof}
\begin{remark}
The criteria in Theorem \ref{thm: criteria2} using the curve $\gamma_L=f_L \circ \gamma$ on the corresponding surface $f_L$ do not depend on the choice of parameter of the singular curve $\gamma$ and do not require the normal Gauss map $g$ of $f$. Therefore, such criteria are useful when the CMC surface in $\L$ corresponding to a timelike minimal surface in $\Nil(\tau)$ is known concretely.
\end{remark}

%%%%%%%%%%%%%%%%%%%%%%%%%%%%%%%%%%%
%%%%%%%%%%%%%%%%%%%%%%%%%%%%%%%%%%%
\section{Examples}\label{sec: example}
We recall a class of surfaces called {\it B-scrolls}, first introduced by Graves \cite{Graves}. These surfaces have properties with no counterpart in spacelike settings, and thus provide characteristic examples in timelike surface theory. The timelike minimal surfaces in $\Nil(\tau)$ corresponding to CMC $\tau$ B-scrolls are called {\it B-scroll type minimal surfaces} (see \cite{KK}). Like B-scrolls, they also exhibit features not found in spacelike geometry. The second named author explicitly constructed B-scroll type minimal surfaces and the corresponding B-scrolls, by elementary computations in \cite{K}.
Applying Theorem~\ref{thm: criteria2}, we obtain the following examples. For more details on B-scroll type minimal surfaces with singularities, we refer the reader to \cite{AK2}.

For a given constant $\tau$, let $B$ be an $\R^3$-valued function defined on an open interval $I$ which is a solution of the Frenet-Serret equations
\begin{equation}\label{eq: B-scroll}
(A\ B\ C)' = (A\ B\ C) \begin{pmatrix} 0&0&\tau\\0&0&\kappa\\\kappa&\tau&0 \end{pmatrix},
\end{equation}
where $\kappa$ is a smooth function on $I$ and $(A,B,C)$ is a null frame satisfying
\begin{equation}\label{eq:frame_cond}
\langle A,A\rangle=\langle B,B\rangle=0,\quad \langle A,B\rangle=-1,\quad C=A\times B.
\end{equation}

Let us define a map $f_L\colon I \times \R \to \R^3$ by
\[f_L(s,t) = \int A(s) ds + t B(s).\]
Then, $f_L$ becomes a timelike CMC $\tau$ surface in $\L$, which is is called a {\it B-scroll}.
%%%%%%%%%%%%%%%%%%%%%%%%%%%%%%%%%%
\begin{example}[{\cite[Example $6.3$]{AK2}}]
It can be easily checked that a matrix-valued function $(A(s)\ B(s)\ C(s))$ defined by
\[
A(s)=\begin{pmatrix} \cosh 2s\\ 1\\ -\sinh 2s \end{pmatrix},
\quad
B(s)=\frac12\begin{pmatrix} \cosh 2s\\ -1\\ -\sinh 2s \end{pmatrix},
\quad
C(s)=\begin{pmatrix} \sinh 2s\\ 0\\ -\cosh 2s\end{pmatrix}
\]
is a solution of the B-scroll condition \eqref{eq: B-scroll} with $\kappa=2$ and $\tau=1$.
Then the provided B-scroll $f_L$ is represented as
\[
f_L(s,t) = \frac12 \begin{pmatrix} \sinh 2s + t\cosh 2s\\ 2s-t\\ -1 -\cosh 2s -t\sinh 2s \end{pmatrix}.
\]
Consequently, Theorem \ref{thm: criteria2} shows that the corresponding B-scroll type minimal surface has the swallowtails at $(s,t) = \left( s^{\pm}, -\dfrac{C_3(s^{\pm})}{B_3(s^{\pm})} \right)$ where $s^{\pm} = \frac14 \log \left(5 \pm 2\sqrt{6}\right)$, $B_3(s) = -\frac12 \sinh 2s$, and $C_3(s) = -\cosh 2s$ (see Figure \ref{Fig:SW}).
\begin{figure}[h!]
\vspace{-0.5cm}
\begin{center}
 \begin{tabular}{{c@{\hspace{-10mm}}c@{\hspace{-10mm}}c}}
\hspace{-5mm}   \resizebox{6.0cm}{!}{\includegraphics[clip,scale=0.30,bb=0 0 555 449]{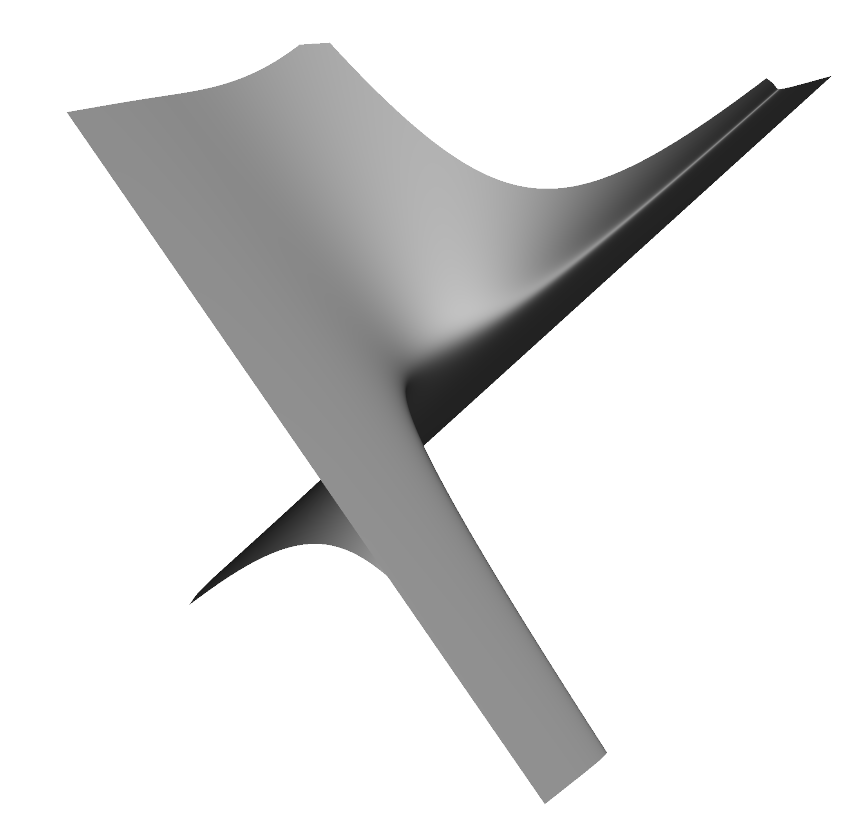}}&
 \hspace{-0mm} 
\resizebox{6.5cm}{!}{\includegraphics[clip,scale=0.30,bb=0 0 555 449]{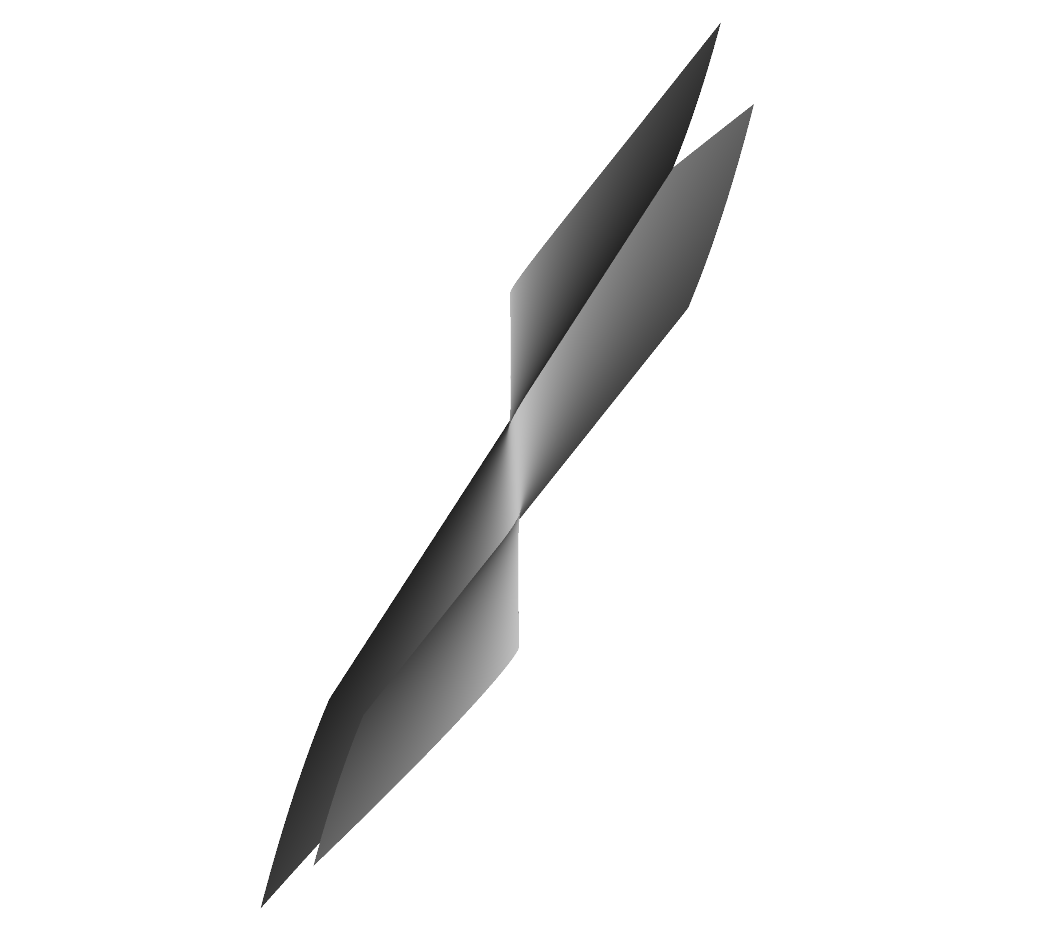}} &
 \hspace{-0mm} 
\resizebox{5.5cm}{!}{\includegraphics[clip,scale=0.30,bb=0 0 555 449]{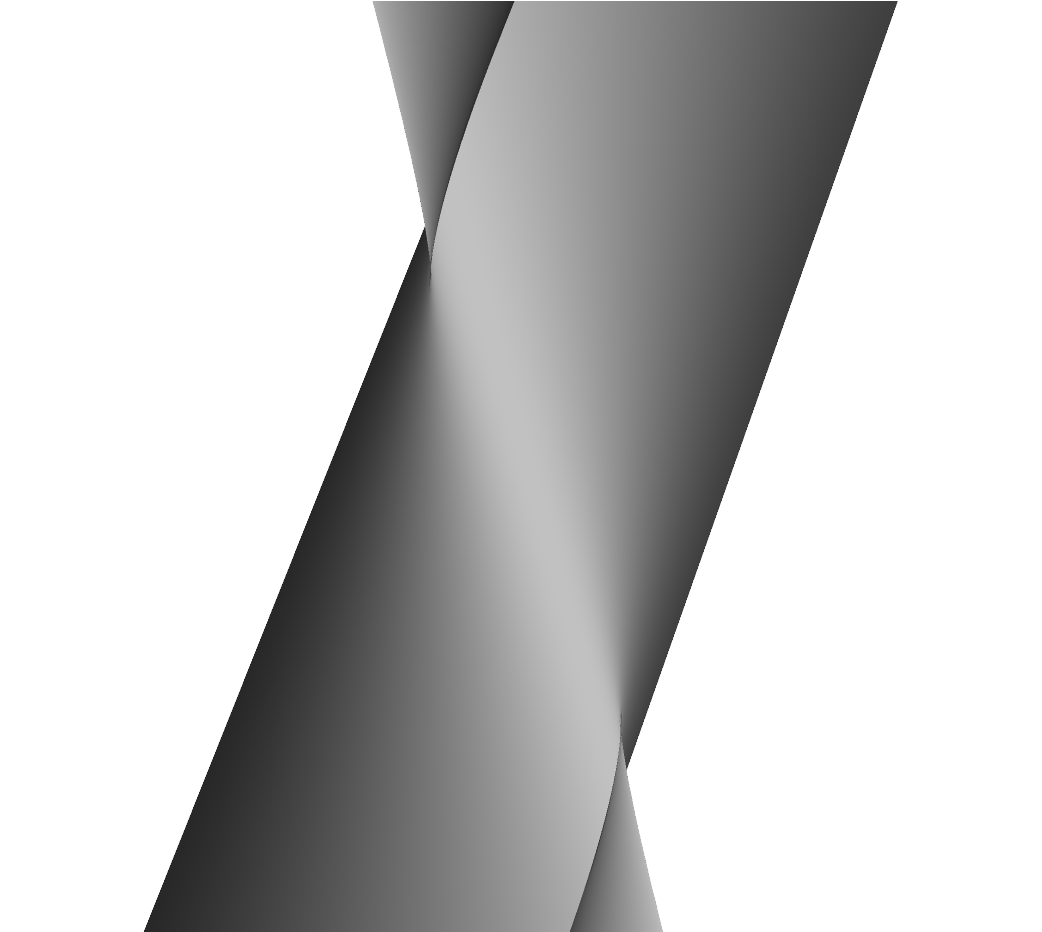}} \\
  {\hspace{-15mm}\footnotesize  $f_L$ in $\mathbb{L}^3$} &
  {\hspace{-15mm}\footnotesize  $f$ in $\Nil(1)$}&
  {\hspace{-15mm}\footnotesize  two swallowtails of $f$}
 \end{tabular}
 \caption{A timelike constant mean curvature (CMC) $1$ B-scroll $f_L$ in $\mathbb{L}^3$ (left) and the corresponding timelike minimal B-scroll type surface $f$ in $\Nil(1)$ with swallowtails (center and right). }
 \label{Fig:SW}
\end{center}
\end{figure}
\end{example}

\begin{example}[{\cite[Example $6.1$]{AK2}}]
We do not go into the details here, but a matrix-valued function $(A\ B\ C)$ satisfying the assertion $(4)$ in Theorem \ref{thm: criteria2} at some point can be explicitly constructed. For instance, $(A\ B\ C)$ derived from the B-scroll condition \eqref{eq: B-scroll} with $\kappa=-(6 - 36 s^2)/(1 + 3 s^2)^2$ and $\tau=1$ induces a B-scroll type minimal surface with cuspidal cross caps at $\left(s^{\pm}, -C_3(s^{\pm})/B_3(s^{\pm}) \right)$ where $s^{\pm} = \pm1/\sqrt{6}$. Here, $B_3(s)$ and $C_3(s)$ denote the third component of $B(s)$ and $C(s)$ (see Figure \ref{Fig:CCR}).
\begin{figure}[h!]
\vspace{-0.5cm}
\begin{center}
 \begin{tabular}{{c@{\hspace{-10mm}}c}}
\hspace{10mm}   \resizebox{8.0cm}{!}{\includegraphics[clip,scale=0.30,bb=0 0 555 449]{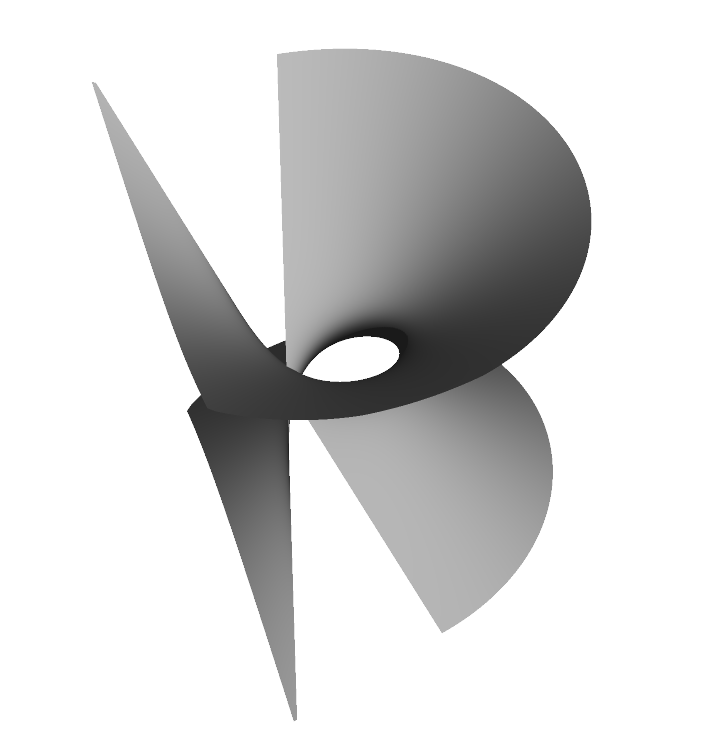}}&
 \hspace{-0mm} 
\resizebox{8.5cm}{!}{\includegraphics[clip,scale=0.30,bb=0 0 555 449]{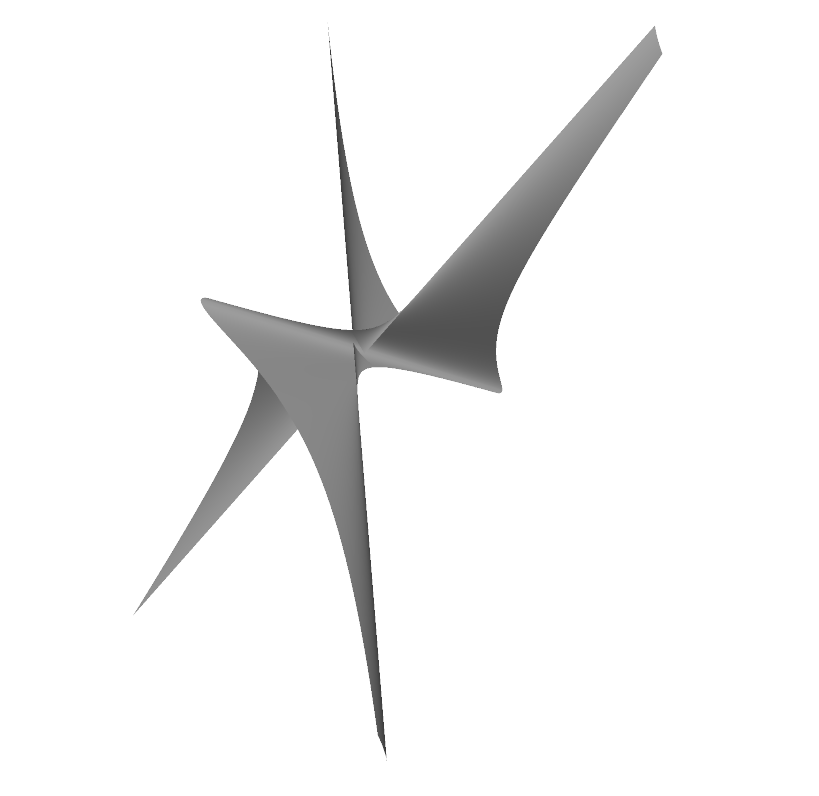}} \\
  {\hspace{-15mm}\footnotesize  $f_L$ in $\mathbb{L}^3$} &
   {\hspace{-35mm}\footnotesize  $f$ in $\Nil(1)$}
 \end{tabular}
 \caption{A timelike constant mean curvature (CMC) $1$ B-scroll $f_L$ in $\mathbb{L}^3$ (left) and the corresponding timelike minimal B-scroll type surface $f$ in $\Nil(1)$ with cuspidal cross caps (right). }
 \label{Fig:CCR}
\end{center}
\end{figure}
\end{example}

Accordingly, we have verified the following existence theorem.
\begin{theorem}\label{thm: existence}
There exist timelike minimal surfaces that have cuspidal edges, swallowtails, or cuspidal cross caps.
\end{theorem}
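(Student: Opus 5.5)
The proof will be constructive: for each of the three singularity types we exhibit one explicit timelike minimal surface in $\Nil(\tau)$ and verify it with Theorem \ref{thm: criteria2}.

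All examples will be B-scroll type minimal surfaces. We fix $\tau=1$, choose a function $\kappa$, solve the Frenet--Serret system \eqref{eq: B-scroll} for a null frame $(A,B,C)$ satisfying \eqref{eq:frame_cond}, and form the B-scroll $f_L(s,t)=\int A\,ds+tB$. This is a timelike CMC $1$ surface in $\L$. By the duality of Subsection \ref{subsec: correspondence}, it determines a Lorentzian harmonic map $g$ (its Gauss map up to the factor $-j$). Theorem \ref{thm: identify TMS} then produces the corresponding timelike minimal surface $f$ in $\Nil(1)$. Since $f$ and $f_L$ share $(f_L)_z$ as in \eqref{eq: relation}, we never need $f$ explicitly; only $f_L$ and its unit normal $N_L$ enter.

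The verification proceeds in three steps for each example.

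\begin{enumerate}
\item Locate $\Sigma^g$, which is the set where $\langle N_L,e_3\rangle=0$. For a B-scroll the normal is essentially $C(s)$ corrected along the $B$ direction. Solving this condition gives a regular curve $t=-C_3(s)/B_3(s)$.
\item Check the hypotheses of Theorem \ref{thm: criteria2}. We must confirm that $\hat\omega\neq0$ there (regularity of $g$, since $f_L$ is immersed) and that the points are non-degenerate, i.e.\ $\langle dN_L,e_3\rangle\neq0$.
\item Compute $\gamma_L(s)=f_L(s,-C_3/B_3)$ and its derivatives $\gamma_L'$, $\gamma_L''$, then test the conditions of Theorem \ref{thm: criteria2}:
\begin{itemize}
\item A \emph{cuspidal edge} occurs at a generic point of the curve, where $\langle\gamma_L',e_3\rangle\neq0$ and $\gamma_L'\nparallel e_3$.
\item For the frame with $\kappa=2$ in the first example, the \emph{swallowtails} are the parameters $s^\pm=\frac14\log(5\pm2\sqrt6)$, where $\gamma_L'\parallel e_3$ but $\gamma_L''\nparallel e_3$.
\item For $\kappa=-(6-36s^2)/(1+3s^2)^2$ in the second example, the \emph{cuspidal cross caps} are at $s^\pm=\pm1/\sqrt6$, where $\langle\gamma_L',e_3\rangle=0$, $\langle\gamma_L'',e_3\rangle\neq0$ and $\gamma_L''\nparallel e_3$.
\end{itemize}
\end{enumerate}

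The main obstacle is the cuspidal cross cap example. There the frame $(A,B,C)$ is not elementary, so we cannot evaluate everything in closed form. Instead we express $\gamma_L'$ and $\gamma_L''$ through the frame using \eqref{eq: B-scroll}. This reduces each condition to polynomial identities in $\kappa$, $\kappa'$ and the components of the frame at $s^\pm$. Those identities are then checked using the normalization \eqref{eq:frame_cond} together with the explicit values of $\kappa(s^\pm)$ and $\kappa'(s^\pm)$; this is the reduction carried out in \cite{AK2}.

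The swallowtail example, by contrast, is a direct hyperbolic-function computation. Its only subtle point is the sign check that $\gamma_L''$ is not parallel to $e_3$ at $s^\pm$.
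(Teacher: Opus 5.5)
Your proposal is correct and follows essentially the same route as the paper, which obtains the theorem from the same two B-scroll examples ($\kappa=2$ for swallowtails at $s^\pm=\frac14\log(5\pm2\sqrt6)$, $\kappa=-(6-36s^2)/(1+3s^2)^2$ for cuspidal cross caps at $s^\pm=\pm1/\sqrt6$, with cuspidal edges at generic points of the singular curves), checked via Theorem~\ref{thm: criteria2} and deferring the details to \cite{AK2}. Your claimed reduction for the cuspidal cross cap does go through: along $t=-C_3/B_3$ one finds $\langle\gamma_L',e_3\rangle=-\kappa B_3$, so the conditions reduce to $\kappa(s^\pm)=0$, $\kappa'(s^\pm)\neq0$ and the identity $C_3^2-2A_3B_3=1$ coming from \eqref{eq:frame_cond}, provided the initial frame is chosen so that $B_3(s^\pm)\neq0$.
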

%%%%%%%%%%%%%%%%%%%%%%%%%%%%%%%%%%

\end{document}